\documentclass[12pt]{amsart}
\usepackage{listings} 
\usepackage{latexsym, amsmath, amssymb,amsthm,amsopn,amsfonts}
\usepackage{version}
\usepackage{mathrsfs}
\usepackage{epsfig,graphics,color,graphicx,graphpap}
\usepackage{amssymb}
\usepackage{todonotes}
\usepackage[citecolor=blue]{hyperref}
\usepackage{upgreek}

\usepackage[framemethod=tikz]{mdframed}

\newcommand{\redsout}{\bgroup\markoverwith{\textcolor{red}{\rule[0.5ex]{2pt}{.4pt}}}\ULon}
\textheight=220mm \textwidth=162mm
\topmargin=.0cm
\oddsidemargin= 0.2cm
\evensidemargin=0.2cm

\newcommand{\curl}{\operatorname{curl}}

\newcommand{\p}{\partial}
\newcommand{\C}{\mathbb{C}}
\newcommand{\R}{\mathbb{R}}

\definecolor{skyblue}{rgb}{0.85,0.85,1}

\numberwithin{equation}{section}
\newtheorem{theorem}{Theorem}
\newtheorem{proposition}{Proposition}

\newtheorem{lemma}{Lemma}

\newtheorem{remark}{Remark}

\newcommand{\inclusion}{\hookrightarrow}

\newcommand{\Div}{\operatorname{Div}}

\renewcommand{\Re}{\operatorname{Re}}

\newcommand{\wt}{\widetilde}

\begin{document}
\title[Inverse problems for nonlinear Maxwell equations]{Inverse problems for nonlinear Maxwell's equations with second harmonic generation}

\author[Yernat M. Assylbekov]{Yernat M. Assylbekov }
\address{Department of Computational and Applied Mathematics, Rice University, Houston, TX 77005, USA}
\email{yernat.assylbekov@gmail.com}

\author[Ting Zhou]{Ting Zhou}
\address{Department of Mathematics, Northeastern University, Boston, MA 02115, USA}
\email{t.zhou@northeastern.edu}

\maketitle

\begin{abstract}
In the current paper we consider an inverse boundary value problem of electromagnetism with nonlinear Second Harmonic Generation (SHG) process. We show the unique determination of the electromagnetic material parameters and the SHG susceptibility parameter of the medium by making electromagnetic measurements on the boundary. We are interested in the case when a frequency is fixed.

\end{abstract}


\section{Introduction}\label{section::introduction}

The beginning of the field of {\em nonlinear optics} is often taken to be the discovery of second-harmonic generation by Franken et al. in 1961, shortly after the demonstration of the first working laser by Maiman in 1960. Nonlinear optical phenomena occur when the response of a material to an incident optical field depends nonlinearly on the strength of the optical field. For example, second-harmonic generation occurs as a result of the part of the atomic response that scales quadratically with the strength of the applied optical field. Consequently, the intensity of the light generated at the second-harmonic frequency tends to increase as the square of the intensity of the applied laser light. 

Ever since the invention of laser, fascinating new fields have emerged, among which {\em nonlinear optics} has the broadest scope. 
Applications of the nonlinear optical phenomena includes obtaining coherent radiation at a wavelength shorter than that of the incident laser, through the frequency doubling effect of SHG. Moreover, in the {\em second-harmonic imaging microscopy (SHIM)}, a second-harmonic microscope obtains contrasts from variations in {\em a specimen's ability to generate second-harmonic light} from the incident light while a conventional optical microscope obtains its contrast by detecting variations in optical density, path length, or refractive index of the specimen. 
The SHIM is also exploited in imaging flux residues (see the work in Chen Lab at the University of Michigan). Although nonlinear optical effects are in general very weak, the significant enhancement of SHG was shown using diffraction gratings or periodic structures, see \cite{RN,RNACMP}. \\

In this paper, we consider an inverse boundary value problem for Maxwell's equations with {second harmonic generation}. Let $\Omega$ be a bounded domain in $\R^3$ with smooth boundary. We consider the propagation of light through a nonlinear optical medium occupying $\Omega$ by beginning with Maxwell's equations without free charges
\[
\left\{\begin{array}{l}
\nabla\times E+\partial_t B=0\\
\nabla\times H-\partial_t D=0.
\end{array}
\right.
\]
where $E(t, x), H(t, x)$ are electric and magnetizing fields, $D$ is the electric displacement and $B$ is the magnetic field. For a non-magnetic material with nonlinear polarization, we have the constitutive relation 
\[B=\mu H\qquad D=\varepsilon E +P\]
where $\mu, \varepsilon$ are electric permittivity and magnetic permeability, respectively, and the polarization $P$ depends nonlinearly\footnote{There exists a linear term in the polarization, which we assume to be scalar and merge into the term $\varepsilon E$ so that our $P$ only has nonlinear terms.} on the electric field $E$. 
In the process of second-harmonic generation (SHG), 
the nonlinear polarization $P=(P_l)$ takes the form
\[P_l(t, x)=\sum_{jk}\chi_{jkl}^{(2)}E_jE_k\]
where $\chi^{(2)}$ is the second order susceptibility parameter. When a beam with time-harmonic electric field 
\[E(t, x)=E(x)e^{-i\omega t} + \textrm{c.c.}\]
is incident upon such a medium (e.g., a noncentrosymmetric crystal), new waves are generated at zero frequency and at frequency $2\omega$. 
Writing the solution $(E(t,x), H(t,x))$ to include these terms
\[\begin{split}E(t, x) &= 2\Re\left\{E^\omega(x)e^{-i\omega t}\right\}+2\Re\left\{E^{2\omega}(x)e^{-i2\omega t}\right\}\\
H(t, x) &= 2\Re\left\{H^\omega(x)e^{-i\omega t}\right\}+2\Re\left\{H^{2\omega}(x)e^{-i2\omega t}\right\},
\end{split}\]
and assuming that the susceptibility parameter is isotropic $\chi^{(2)}=(\chi^{(2)}_l)_{l=1}^3$,
then we obtain the time-harmonic system 
\begin{equation}\label{eqn:ME-SHG}
\left\{
\begin{split}
&\nabla\times E^{\omega}-i\omega\mu H^{\omega}=0,\\
&\nabla\times H^{\omega}+i\omega\varepsilon E^{\omega}+i\omega\chi^{(2)}\overline{E^{\omega}}\cdot{E^{2\omega}}=0,\\
&\nabla\times E^{2\omega}-i2\omega\mu H^{2\omega}=0,\\
&\nabla\times H^{2\omega}+i2\omega\varepsilon E^{2\omega}+i2\omega\chi^{(2)}E^{\omega}\cdot E^{\omega}=0.
\end{split}
\right.
\end{equation}
Here we can assume that $\mu$ and $\varepsilon$ depend on the frequency $\omega$.
For more discussion on the mathematical modeling of the second harmonic generation in nonlinear optics, we refer the readers to \cite{BD, BLZ, ABH}.

The inverse problem is to uniquely determine both the linear pair $(\mu, \varepsilon)$ and the second order susceptibility parameter $\chi^{(2)}$ using the boundary measurements of the electricmagnetic fields, taking the form of the admittance map defined in \eqref{eqn:admit_map}. Physically, this corresponds to measuring the tangential components of electric fields and magnetic fields for both frequencies $\omega$ and $2\omega$ on the boundary (surface) of the medium $\Omega$. \\

The type of inverse boundary value problem was first formulated by Calder\'on \cite{Calderon1980} for the linear conductivity equation $\nabla\cdot\gamma(x)\nabla u=0$ when he sought to determine the electrical conductivity $\gamma(x)$ of a medium by making boundary measurements of electric voltage and current. The unique determination was proved in \cite{SylvesterUhlmann1987} in dimension $n\geq3$ by solving the problem of determining an electric potential $q(x)$ in a Schr\"odinger operator $-\Delta+q$ from the boundary Dirichlet and Neumann data. Since then, the inverse problem has been extensively studied in various generalized cases, including those for other elliptic PDE's such as the magnetic Schr\"odinger equations, linear elasticity equations and so on. The inverse boundary value problem for linear time-harmonic Maxwell's equations was first formulated in \cite{SIC} and later fully solved in \cite{OPS1993, OlaSomersalo1996}, adopting a similar approach of implementing special complex phased exponential solutions, as in solving those for above mentioned scalar equations. 

In dealing with the inverse problems for nonlinear PDEs, such inverse boundary value problems have been considered for various semilinear and quasilinear elliptic equations and systems (see \cite{I1,I2,IS,IN,Su1,SuU,Su2,HSu}) based on a first order linearization approach. 
Recently the higher order linearization of the boundary map has been applied in determining the full nonlinearity of the medium for certain elliptic PDE's. See \cite{FO2019, KU201909, KU201905, LaiL2020, LLLS201903, LLLS201905}. The method was also successfully applied to solve inverse problems for nonlinear hyperbolic equations on the spacetime \cite{KLU18}, where in contrast the underlying problems for linear hyperbolic equations are still open. Shortly before these results, in our paper \cite{AssylbekovZhou2018}, we first adopt the method to solve the inverse problems for time-harmonic Maxwell's equations with Kerr-type nonlinearity, where the nonlinear term takes the form $\chi^{(3)}|E|^2E$ with $\chi^{(3)}(x)$ being the third order nonlinear susceptibility parameter and $E$ representing the electric field. In this type, the fields of different frequencies are not assumed coupled.\\

In this paper, we first present the well-posedness of the boundary value problem for the time-harmonic nonlinear second harmonic generation Maxwell's equations when the boundary data is small. Then we solve the inverse problem to determine the parameters, with the focus mainly on the second order susceptibility parameter $\chi^{(2)}(x)$. 
\subsection{Direct problem}
First we consider the boundary value problem for the nonlinear Maxwell equations~\eqref{eqn:ME-SHG}. We suppose that {$\varepsilon,\mu\in C^1(\Omega)$} are {complex-valued functions with positive real parts and {$\chi^{(2)}\in C^1(\Omega;\C^3)$}.\smallskip

The boundary conditions are expressed in terms of \emph{tangential trace},
$$
\mathbf{t}:C^\infty(\Omega; \C^3)\to C^\infty(\p \Omega; \C^3)\quad \mathbf{t}(w)=\nu\times w|_{\p\Omega},\quad w\in C^\infty(\Omega;\C^3),
$$
where $\nu$ is the unit outer normal vector to $\p\Omega$. Then $\mathbf t$ has its extension to a bounded operator $W^{1,p}(\Omega;\C^3)\to W^{1-1/p,p}(\p\Omega;\C^3)$ for $p>1$. Here and in what follows, $W^{1,p}(\Omega;\C^3)$ and $W^{1-1/p,p}(\p\Omega; \C^3)$ are standard Sobolev spaces on $\Omega$ and $\p \Omega$, respectively.\smallskip

To describe the boundary conditions, we introduce the spaces
\begin{align*}
W^{1,p}_{\Div}(\Omega)&=\{u\in W^{1,p}(\Omega;\C^3):\Div(\mathbf t(u))\in W^{1-1/p,p}(\p\Omega; \C^3)\},\\
TW^{1-1/p,p}_{\Div}(\p \Omega)&=\{f\in W^{1-1/p,p}(\p\Omega;\C^3):\Div(f)\in W^{1-1/p,p}(\p\Omega;\C^3)\},
\end{align*}
where $\Div$ is the surface divergence on $\p \Omega$. These spaces are Banach spaces with norms
\begin{align*}
\|u\|_{W^{1,p}_{\Div}(\Omega)}&=\|u\|_{W^{1,p}(\Omega;\C^3)}+\|\Div(\mathbf t(u))\|_{W^{1-1/p,p}(\p \Omega;\C^3)},\\
\|u\|_{TW^{1-1/p,p}_{\Div}(\p M)}&=\|f\|_{W^{1-1/p,p}(\p M)}+\|\Div(f)\|_{W^{1-1/p,p}(\p M)}.
\end{align*}
It is not difficult to see that $\mathbf t(W^{1,p}_{\Div}(M))=TW^{1-1/p,p}_{\Div}(\p M)$.
Our first main result is the following theorem on well-posedness of the nonlinear Maxwell equations~\eqref{eqn:ME-SHG} with prescribed small $\mathbf t(E^{\omega})$ and $\mathbf t(E^{2\omega})$ on $\p \Omega$.

\begin{theorem}\label{thm::main result 1}
Let $\Omega$ be a bounded domain in $\R^3$ with smooth boundary and let $3<p\le 6$. Suppose that {$\varepsilon,\mu\in C^1(\Omega;\C)$} are complex-valued functions with positive real parts and $\chi^{(2)}\in C^1(\Omega;\R^3)$. For every $\omega\in \C$, outside a discrete set $\Sigma\subset\C$ of resonant frequencies, there is $\epsilon>0$ such that for a pair $(f^\omega,f^{2\omega})\in \left(TW^{1-1/p,p}_{\Div}(\p M)\right)^2$ with $\displaystyle\sum_{k=1,2}\|f^{k\omega}\|_{TW^{1-1/p,p}_{\Div}(\p \Omega)}<\epsilon$, the Maxwell's equations \eqref{eqn:ME-SHG} has a unique solution $(E^{\omega},H^{\omega}, E^{2\omega}, H^{2\omega})\in \left(W^{1,p}_{\Div}(\Omega)\right)^4$ satisfying $\mathbf{t}(E^{k\omega})=f^{k\omega}$ for $k=1,2$ and
$$
\sum_{k=1,2}\|E^{k\omega}\|_{W^{1,p}_{\Div}(\Omega)}+\|H^{k\omega}\|_{W^{1,p}_{\Div}(\Omega)}\le C\sum_{k=1,2}\|f^{k\omega}\|_{TW^{1-1/p,p}_{\Div}(\p \Omega)},
$$
for some constant $C>0$ independent of $(f^{\omega}, f^{2\omega})$.
\end{theorem}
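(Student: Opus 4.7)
My plan is to recast the coupled system \eqref{eqn:ME-SHG} as a fixed-point equation for a map $\Phi$ built out of the linear time-harmonic Maxwell solution operators at the two frequencies $\omega$ and $2\omega$. Given a tuple $U=(U^{\omega},V^{\omega},U^{2\omega},V^{2\omega})$ in the product space $X:=(W^{1,p}_{\Div}(\Omega))^4$, define $\Phi(U)=(E^\omega,H^\omega,E^{2\omega},H^{2\omega})$ to be the solution of the two \emph{decoupled} linear systems
\[
\nabla\times E^{k\omega}-ik\omega\mu H^{k\omega}=0,\qquad \nabla\times H^{k\omega}+ik\omega\varepsilon E^{k\omega}=J_k[U],
\]
with boundary data $\mathbf t(E^{k\omega})=f^{k\omega}$, where $J_1[U]=-i\omega\chi^{(2)}\overline{U^\omega}\cdot U^{2\omega}$ and $J_2[U]=-2i\omega\chi^{(2)}U^\omega\cdot U^\omega$. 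By construction, fixed points of $\Phi$ are exactly the solutions of \eqref{eqn:ME-SHG} realizing the prescribed tangential traces.

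The first substantial step is to establish well-posedness for this linear boundary-value problem on the $W^{1,p}_{\Div}$ scale. Eliminating $H^{k\omega}=(ik\omega\mu)^{-1}\nabla\times E^{k\omega}$ reduces the system to the second-order equation $\nabla\times(\mu^{-1}\nabla\times E^{k\omega})-(k\omega)^2\varepsilon E^{k\omega}=ik\omega J_k[U]$, which, together with the Gauss law $\Div(\varepsilon E^{k\omega})=-(ik\omega)^{-1}\Div J_k[U]$, forms a Hodge-type elliptic system with oblique boundary conditions. Adapting the Ola--P\"aiv\"arinta--Somersalo framework \cite{OPS1993, OlaSomersalo1996} and the linear theory used in the authors' earlier treatment of the Kerr nonlinearity \cite{AssylbekovZhou2018}, one expects unique solvability of $\Phi(U)$ with the estimate $\|\Phi(U)\|_X\lesssim \sum_k\|f^{k\omega}\|_{TW^{1-1/p,p}_{\Div}(\p\Omega)}+\sum_k\|J_k[U]\|_{W^{1,p}(\Omega)}$ whenever $(k\omega)^2$ lies outside the discrete set of eigenvalues of the associated Maxwell operator. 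The excluded set $\Sigma\subset\C$ is the union, over $k=1,2$, of these resonant frequencies.

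The second ingredient is the Sobolev algebra property. Since $p>3$, the embedding $W^{1,p}(\Omega)\hookrightarrow L^\infty(\Omega)$ gives $\|uv\|_{W^{1,p}}\lesssim \|u\|_{W^{1,p}}\|v\|_{W^{1,p}}$, hence the quadratic bounds $\|J_k[U]\|_{W^{1,p}}\lesssim \|U\|_X^2$ and the Lipschitz bounds $\|J_k[U]-J_k[\tilde U]\|_{W^{1,p}}\lesssim(\|U\|_X+\|\tilde U\|_X)\|U-\tilde U\|_X$. Composing with the linear estimate yields
\[
\|\Phi(U)\|_X\le C_0\sum_{k=1,2}\|f^{k\omega}\|_{TW^{1-1/p,p}_{\Div}(\p\Omega)}+C_0\|U\|_X^2,
\]
together with a matching Lipschitz inequality on differences. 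Restricting $\Phi$ to the closed ball of radius $\rho=2C_0\epsilon$ and choosing $\epsilon>0$ so small that $2C_0\rho<1$ makes $\Phi$ simultaneously a self-map and a strict contraction; the Banach fixed point theorem then produces the unique solution, and the claimed quantitative estimate follows by feeding the fixed-point identity back into the above bound.

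The main obstacle is the careful propagation of divergence information through the linear Maxwell theory: because $J_k[U]$ is not divergence-free, one must track $\Div J_k[U]\in L^p(\Omega)$ through the Gauss law in order to conclude that $H^{k\omega}$ lies in $W^{1,p}_{\Div}(\Omega)$ and not merely in $W^{1,p}(\Omega)$, and to check compatibility with the boundary regularity $\Div f^{k\omega}\in W^{1-1/p,p}(\p\Omega)$. The antilinear dependence of $J_1[U]$ on $\overline{U^\omega}$ is harmless provided the fixed-point argument is performed on the underlying real Banach space of $X$.
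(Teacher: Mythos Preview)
Your proposal is correct and follows essentially the same route as the paper: a Banach fixed-point argument built on the linear Maxwell well-posedness results of \cite{AssylbekovZhou2018} (the paper's Lemmas~\ref{thm:linear-homog} and~\ref{thm:linear-inhomog}) together with the $W^{1,p}$ algebra property for $p>3$. The only cosmetic difference is that the paper first subtracts the linear solution $(E_0^{k\omega},H_0^{k\omega})$ carrying the boundary data and runs the contraction in the zero-trace space $W^{1,p}_D(\Omega)\times W^{1,p}_{\Div}(\Omega)$ via the inhomogeneous solution operator alone, whereas your map $\Phi$ handles boundary data and source in one step---equivalent by linear superposition.
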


\begin{remark}
{ We can also add nonlinear magnetic polarization terms to the equations. The analysis will be the same.
}	
\end{remark}


\subsection{Inverse problem}

For $\omega>0$ with $\omega\notin \Sigma$, we define the \emph{admittance map} $\Lambda_{\varepsilon,\mu,\chi^{(2)}}^{\omega,2\omega}$ as
\begin{equation}\label{eqn:admit_map}
\Lambda_{\varepsilon,\mu,\chi^{(2)}}^{\omega, 2\omega}(f^\omega, f^{2\omega})=\left(\mathbf t(H^{\omega}), \mathbf t(H^{2\omega})\right),
\end{equation}
for $(f^\omega, f^{2\omega})\in \left(TW^{1-1/p,p}_{\Div}(\p \Omega)\right)^2$ with $\displaystyle\sum_{k=1,2}\|f^{k\omega}\|_{TW^{1-1/p,p}_{\Div}(\p \Omega)}<\epsilon$, 
where $(E^{\omega},H^{\omega}, E^{2\omega}, H^{2\omega})\in \left(W^{1,p}_{\Div}(\Omega)\right)^4$ is the unique solution of the system \eqref{eqn:ME-SHG} with $\mathbf t(E^{k\omega})=f^{k\omega}$, ($k=1,2$), guaranteed by Theorem~\ref{thm::main result 1}. Moreover, the estimate provided in Theorem~\ref{thm::main result 1} implies that the admittance map satisfy
$$
\|\Lambda_{\varepsilon,\mu,\chi^{(2)}}^{\omega,2\omega}(f^\omega, f^{2\omega})\|_{\left(TW^{1-1/p,p}_{\Div}(\p \Omega)\right)^2}\le C\sum_{k=1,2}\|f^{k\omega}\|_{TW^{1-1/p,p}_{\Div}(\p \Omega)}<C\epsilon.
$$
The inverse problem is to determine $\varepsilon,\mu$ and $\chi^{(2)}$ from the knowledge of the admittance map $\Lambda_{\varepsilon,\mu,\chi^{(2)}}^{\omega, 2\omega}$.\smallskip\\

Our second main result is for the inverse problem as follows.

\begin{theorem}\label{thm::main result 2}
Let $\Omega$ be a bounded domain in $\R^3$ with smooth boundary and let $3< p< 6$. Suppose that {$\varepsilon_j,\mu_j\in C^3(\Omega;\C)$} with positive real parts and {$\chi^{(2)}_j\in C^1(\Omega;\C^3)$}, $j=1,2$. Fix $\omega>0$ outside a discrete set of resonant frequencies $\Sigma\subset\C$ and fix sufficiently small $\epsilon>0$. If
$$
\Lambda_{\varepsilon_1,\mu_1,\chi^{(2)}_1}^{\omega,2\omega}(f^\omega, f^{2\omega})=\Lambda_{\varepsilon_2,\mu_2,\chi^{(2)}_2}^{\omega,2\omega}(f^\omega, f^{2\omega})
$$
for all $(f^\omega, f^{2\omega})\in \left(TW^{1-1/p,p}_{\Div}(\p \Omega)\right)^2$ with $\displaystyle\sum_{k=1,2}\|f^{k\omega}\|_{TW^{1-1/p,p}_{\Div}(\p \Omega)}<\epsilon$, then $\varepsilon_1=\varepsilon_2$, $\mu_1=\mu_2$, $\chi^{(2)}_1=\chi^{(2)}_2$ in $\Omega$.
\end{theorem}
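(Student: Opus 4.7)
The plan is a two-stage higher-order linearization of the admittance map in the spirit of our earlier work~\cite{AssylbekovZhou2018}, carried out in the order \emph{linear parameters first, nonlinear coefficient second}. Fix boundary data $(g^\omega,g^{2\omega})\in (TW^{1-1/p,p}_{\Div}(\p\Omega))^2$ and consider the family $(f^\omega,f^{2\omega})=s(g^\omega,g^{2\omega})$ for $s\in\C$ small. Theorem~\ref{thm::main result 1} produces solutions $(E^{k\omega}_j(s),H^{k\omega}_j(s))$ that depend analytically on $s$ and vanish at $s=0$. Differentiating \eqref{eqn:ME-SHG} once in $s$ at $s=0$ kills the quadratic nonlinearities, so the first derivatives $v^{E,k\omega}_j:=\p_s E^{k\omega}_j|_{s=0}$ (and $v^{H,k\omega}_j$ analogously) satisfy the \emph{linear} time-harmonic Maxwell system with coefficients $(\varepsilon_j,\mu_j)$ at frequencies $\omega$ and $2\omega$, with tangential trace $g^{k\omega}$. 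Differentiating the equality of admittance maps once in $s$ then says the linear admittance maps at both $\omega$ and $2\omega$ agree, and the uniqueness theorems of Ola--P\"aiv\"arinta--Somersalo~\cite{OPS1993} and Ola--Somersalo~\cite{OlaSomersalo1996} give $\varepsilon_1=\varepsilon_2=:\varepsilon$ and $\mu_1=\mu_2=:\mu$ in $\Omega$; the hypothesis $\varepsilon_j,\mu_j\in C^3$ is precisely what is needed to invoke those results.

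With the linear parameters identified, the first-order linearizations for the two coefficient sets coincide, $v^{E,k\omega}_1=v^{E,k\omega}_2=:v^{E,k\omega}$. Write $\delta\chi:=\chi^{(2)}_1-\chi^{(2)}_2$ and $w^{E,k\omega}:=\p_s^2 E^{k\omega}_1|_{s=0}-\p_s^2 E^{k\omega}_2|_{s=0}$ (similarly for $H$). Differentiating \eqref{eqn:ME-SHG} twice in $s$ at $s=0$ and subtracting yields a linear Maxwell system for $(w^{E,k\omega},w^{H,k\omega})$ with internal sources
\begin{equation*}
\nabla\times w^{H,\omega}+i\omega\varepsilon\, w^{E,\omega}=-2i\omega\,\delta\chi\,\bigl(\overline{v^{E,\omega}}\cdot v^{E,2\omega}\bigr),
\end{equation*}
\begin{equation*}
\nabla\times w^{H,2\omega}+2i\omega\varepsilon\, w^{E,2\omega}=-4i\omega\,\delta\chi\,\bigl(v^{E,\omega}\cdot v^{E,\omega}\bigr),
\end{equation*}
together with the source-free Faraday laws. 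Since the boundary data is linear in $s$, $\mathbf t(w^{E,k\omega})=0$; since the admittance maps agree at order $s^2$, $\mathbf t(w^{H,k\omega})=0$. Pairing the first source equation with an auxiliary linear Maxwell solution $(\tilde E^\omega,\tilde H^\omega)$ at frequency $\omega$ and integrating by parts, the two vanishing tangential traces kill all boundary contributions and produce
\begin{equation*}
\int_\Omega \bigl(\delta\chi\cdot\tilde E^\omega\bigr)\bigl(\overline{v^{E,\omega}}\cdot v^{E,2\omega}\bigr)\,dx=0,
\end{equation*}
and analogously from the $2\omega$-equation
\begin{equation*}
\int_\Omega \bigl(\delta\chi\cdot\tilde E^{2\omega}\bigr)\bigl(v^{E,\omega}\cdot v^{E,\omega}\bigr)\,dx=0,
\end{equation*}
valid for all $W^{1,p}_{\Div}(\Omega)$ linear Maxwell solutions at the indicated frequencies.

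To extract $\delta\chi\equiv 0$, I will insert Ola--Somersalo complex geometrical optics (CGO) solutions of the form $E\sim e^{i\zeta\cdot x}\bigl(\eta+O(|\zeta|^{-1})\bigr)$ with $\zeta\in\C^3$ satisfying $\zeta\cdot\zeta=k^2\varepsilon\mu$ at frequency $k\in\{\omega,2\omega\}$ and amplitude $\eta\in\C^3$ approximately transverse to $\zeta$. The leading-order oscillatory factor in the first identity is $e^{i(-\overline{\zeta_1}+\zeta_2+\zeta_3)\cdot x}$ with $\zeta_1,\zeta_3$ at frequency $\omega$ and $\zeta_2$ at frequency $2\omega$; a parameter count shows that the three complex dispersion constraints leave enough freedom to arrange $-\overline{\zeta_1}+\zeta_2+\zeta_3=\xi$ for arbitrary $\xi\in\R^3$ as $|\zeta_j|\to\infty$, so the leading asymptotics of the identity reduce to the vanishing of the Fourier transform of $(\delta\chi\cdot\eta_3)(\overline{\eta_1}\cdot\eta_2)$ at $\xi$; varying the transverse amplitudes $\eta_j$ over an open set then forces $\delta\chi\equiv 0$ componentwise. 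The main obstacle I anticipate is exactly this last density argument: the three complex wave vectors must be balanced so that the phase matching holds, the imaginary part of $-\overline{\zeta_1}+\zeta_2+\zeta_3$ simultaneously vanishes, the CGO remainders at all three $\zeta_j$ remain uniformly small in the $W^{1,p}_{\Div}$ norm of Theorem~\ref{thm::main result 1}, and the transverse amplitudes $\eta_j$ still span enough directions to detect each component of the vector-valued $\delta\chi$. The complex conjugation in the SHG nonlinearity $\overline{E^\omega}\cdot E^{2\omega}$ makes the phase book-keeping noticeably more delicate than in the Kerr case of~\cite{AssylbekovZhou2018}, but a standard three-wave phase matching argument should close it.
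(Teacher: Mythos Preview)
Your two–stage linearization is exactly the paper's strategy: the first derivative in $s$ recovers the linear admittance maps and hence $(\varepsilon,\mu)$ via \cite{OPS1993,OlaSomersalo1996}, and the second derivative yields an integral identity pairing $\delta\chi$ against a product of three linear Maxwell solutions, to be probed with CGOs. (The paper writes a single combined identity with both an $\overline{\tilde E^{\omega}}$– and an $\overline{\tilde E^{2\omega}}$–term and then kills the latter by choosing the trivial $2\omega$–test solution; your two separate identities are equivalent. The paper also tests against solutions of the \emph{conjugated} system so that the complex conjugate of the test field appears, but this is cosmetic.)

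Where your plan is too optimistic is the last paragraph. The sentence ``a standard three-wave phase matching argument should close it'' is precisely what the paper warns against: the CGOs used in the Kerr paper~\cite{AssylbekovZhou2018} (of Kenig--Salo--Uhlmann or Chung--Ola--Salo--Tzou type) do \emph{not} let one pick three phases and amplitudes so that the product recovers the Fourier transform of the vector $\delta\chi$. The paper switches to the original Ola--P\"aiv\"arinta--Somersalo construction~\cite{OPS1993}, whose leading amplitude is a free vector $A_\zeta$ subject only to $\zeta\cdot A_\zeta=0$ with $\zeta\cdot\zeta=(k\omega)^2\mu_0\varepsilon_0$ (constant background, not $\varepsilon\mu$ as you wrote). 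Even there, as $|\zeta|\to\infty$ the direction $\zeta/|\zeta|$ becomes null, so $\zeta^\perp$ degenerates to a two–dimensional complex plane that \emph{contains} the null direction; for a fixed phase–matching configuration the amplitude $\eta_3$ of the test field cannot sweep an open set of $\C^3$, and you must also keep the scalar factor $\overline{\eta_1}\cdot\eta_2$ nonzero in the limit. The paper handles this by writing down explicit $\zeta$'s and $A_\zeta$'s (including a swap of the two transverse axes), checking by hand that $\overline{A_1^\omega}\cdot A_1^{2\omega}$ has a nonzero limit, and assembling the resulting relations
\[
\mathcal F\bigl(a\,\chi_\Omega\,\delta\chi\bigr)(\xi)\cdot(\mathbf e_2+i\mathbf e_3)=0,\qquad
\mathcal F\bigl(a\,\chi_\Omega\,\delta\chi\bigr)(\xi)\cdot\mathbf e_1=0,\qquad
\mathcal F\bigl(a\,\chi_\Omega\,\delta\chi\bigr)(\xi)\cdot(i\mathbf e_2+\mathbf e_3)=0
\]
(with $a=\varepsilon_0^{3/2}|\varepsilon|^{-1}\overline\varepsilon^{-1/2}$ and $\mathbf e_1=\xi/|\xi|$) into $\curl(a\,\chi_\Omega\,\delta\chi)=0$ and $\nabla\cdot(a\,\chi_\Omega\,\delta\chi)=0$ in $\R^3$, whence $\delta\chi=0$. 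So the outline is correct, but the ``density argument'' is the entire technical content of Section~\ref{sec:chi_2} and requires the explicit OPS--type CGOs with hand--picked phases and amplitudes rather than a parameter count.
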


For the SHG type of nonlinear Maxwell's equations, after first order linearization of the boundary admittance map with respect to the boundary input, we can quickly recover $\mu$ and $\varepsilon$ by solving the corresponding inverse problem for the linear equations (see \cite{OPS1993, OlaSomersalo1996}). The difficulty lies in reconstructing the susceptibility parameter $\chi^{(2)}$. By calculating the second order term of the asymptotic expansion of the admittance map, one derives a key integral identity for the parameter $\chi^{(2)}$ multiplied by the product of three linear solutions. The general idea here is to plug in the complex geometrical optics (CGO) solutions that are constructed in solving the inverse problems for linear equations, in order to obtain the Fourier transform of the parameter. When the nonlinearity is of Kerr-type as discussed in \cite{AssylbekovZhou2018}, the integral identity involves products of the parameter $\chi^{(3)}$ with four linear solutions. By plugging in the CGO solutions constructed as in \cite{KenigSaloUhlmann2011a} or \cite{ChungOlaSaloTzou2016} using the reduction to a second order Helmholtz Schr\"odinger system technique, the sum of four complex phases and the product of four amplitudes can be managed to give the desirable Fourier transform. However, with products of three solutions as in the SHG case, the special formats of these solutions do not allow us to choose proper complex phases and amplitudes for the CGO's to get the Fourier transform from the product. Instead, we turn to \cite{OPS1993} for their CGO solutions, with carefully chosen complex phases and amplitudes, and we could successfully reduce the integral into the Fourier transform of components of $\chi^{(2)}$. \\

The paper is organized as following. In Section \ref{sec:directproblem}, we prove the well-posedness of the direct problem (Theorem \ref{thm::main result 1}). In Section \ref{sec:asymp}, we compute the asymptotic expansion of the admittance map $\Lambda^{\omega, 2\omega}_{\mu,\varepsilon, \chi^{(2)}}$. To solve the inverse problem, the reconstruction of $\mu$ and $\varepsilon$ is a direct corollary of the result for linear equations. Then we prove the unique determination of $\chi^{(2)}$ in Section \ref{sec:chi_2}, which completes the proof of Theorem \ref{thm::main result 2}.  \\

\textbf{Acknowledgements.} The research of TZ is supported by the NSF grant DMS-1501049. YA was supported by Total S.A. and the members of the Geo-Mathematical Imaging Group at Rice University. The authors would like to thank Prof. Gunther Uhlmann for suggesting the problem and to thank Prof. Gang Bao for helpful discussions on the second harmonic generation. 


%
%
%


\section{{ Well-posedness of the direct problem}}\label{sec:directproblem}

To prove existence and uniqueness result for the nonlinear  equations, we first need the results for both the homogeneous and inhomogeneous linear equations, given in \cite[Section~3.1]{AssylbekovZhou2018}.

\begin{lemma}\label{thm:linear-homog}
Let $\Omega$ be a bounded domain with a smooth boundary and $2\le p\le 6$ and let $\varepsilon,\mu\in C^1(\Omega;\C)$ be complex functions with positive real parts. There is a discrete subset $\Sigma$ of $\C$ such that for all $\omega\notin \Sigma$ and for a given $f\in TW^{1-1/p,p}_{\Div}(\p \Omega)$ the Maxwell's equation 
\[
\nabla\times E-i\omega\mu H=0,\quad
\nabla\times H+i\omega \varepsilon E=0
\]
has a unique solution $(E,H)\in W^{1,p}_{\Div}(\Omega)\times W^{1,p}_{\Div}(\Omega)$ satisfying $\mathbf{t}(E)=f$ and 
$$
\|E\|_{W^{1,p}_{\Div}(\Omega)}+\|H\|_{W^{1,p}_{\Div}(\Omega)}\le C\|f\|_{TW^{1-1/p,p}_{\Div}(\p \Omega)},
$$
for some constant $C>0$ independent of $f$.
\end{lemma}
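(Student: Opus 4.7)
The plan is to reduce the first-order Maxwell system to a second-order elliptic problem with boundary data and then invoke analytic Fredholm theory together with $L^p$ regularity for the Maxwell operator. First, I would eliminate $H$ using $H = (i\omega\mu)^{-1}\nabla\times E$, obtaining the second-order equation
\begin{equation*}
\nabla\times(\mu^{-1}\nabla\times E) - \omega^2\varepsilon E = 0 \quad \text{in } \Omega,
\end{equation*}
supplemented by the divergence constraint $\nabla\cdot(\varepsilon E) = 0$ (which follows by taking the divergence of the second Maxwell equation) and the boundary condition $\mathbf t(E) = f$.

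Next, I would lift the boundary data: since $\mathbf t : W^{1,p}_{\Div}(\Omega) \to TW^{1-1/p,p}_{\Div}(\p\Omega)$ is surjective, choose an extension $\tilde E \in W^{1,p}_{\Div}(\Omega)$ with $\mathbf t(\tilde E) = f$ and $\|\tilde E\|_{W^{1,p}_{\Div}} \lesssim \|f\|_{TW^{1-1/p,p}_{\Div}}$. Writing $E = \tilde E + u$ reduces the problem to an inhomogeneous second-order system for $u$ with zero tangential trace and a forcing in $L^p$. To handle the div-curl structure cleanly, I would combine the equation with the divergence constraint to obtain an augmented operator of Hodge Laplacian type, which is elliptic in the Douglis-Nirenberg sense with the boundary conditions $\mathbf t(u) = 0$ and $\nabla\cdot(\varepsilon u) = g$ for an explicit $g$. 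Standard $L^p$ elliptic regularity (Calderón-Zygmund estimates adapted to the Maxwell system) then gives $W^{1,p}$ bounds, provided the underlying operator is injective.

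The injectivity (and hence the full solvability) for a.e.\ $\omega$ comes from analytic Fredholm theory. I would check that for some fixed large imaginary $\omega_0$ the operator
\begin{equation*}
T(\omega):\, u \mapsto \nabla\times(\mu^{-1}\nabla\times u) - \omega^2 \varepsilon u
\end{equation*}
together with the tangential and divergence boundary conditions is invertible on the relevant $W^{1,p}_{\Div}$ space; compactness of the lower-order perturbation $\omega \mapsto -\omega^2\varepsilon$ relative to the leading curl-curl term then yields that $\omega \mapsto T(\omega)^{-1}$ is a meromorphic family in $\omega \in \C$, whose poles form the discrete set $\Sigma$ of resonant frequencies. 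For $\omega \notin \Sigma$, invertibility and the continuity estimate follow, and $H$ is then recovered from $E$ via $H = (i\omega\mu)^{-1}\nabla\times E$, which lies in $W^{1,p}_{\Div}(\Omega)$ because $\nabla\cdot(\mu H) = 0$ and the original curl equation gives the additional regularity of $\nabla\times H = -i\omega\varepsilon E$.

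The main obstacle I anticipate is the $L^p$ theory for $p>2$: while the $L^2$ Hilbert space framework for Maxwell's equations is classical, promoting estimates to $W^{1,p}$ and ensuring the right mapping properties on the boundary space $TW^{1-1/p,p}_{\Div}(\p\Omega)$ (whose definition involves the surface divergence) requires the Calderón-Zygmund/Hodge theory for boundary-value problems with mixed tangential and divergence conditions on a $C^\infty$ domain. Since this lemma is quoted verbatim from \cite{AssylbekovZhou2018}, I would rely on that prior analysis for the delicate $L^p$ regularity, and focus the write-up on the Fredholm alternative yielding $\Sigma$.
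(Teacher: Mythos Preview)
The paper does not actually prove this lemma: it is stated with the preamble ``we first need the results for both the homogeneous and inhomogeneous linear equations, given in \cite[Section~3.1]{AssylbekovZhou2018},'' and no argument is given in the present paper. Your proposal correctly identifies this and ultimately proposes to do the same thing---cite the earlier paper for the $L^p$ well-posedness---so in that sense your approach coincides with the paper's.

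The extra content you supply (eliminating $H$, reducing to the second-order curl-curl equation with the divergence constraint, lifting the boundary data, and running analytic Fredholm theory on the resulting operator family) is a reasonable and standard outline of how the result in \cite{AssylbekovZhou2018} is obtained, and it is not inconsistent with that reference. Since the paper itself offers nothing beyond the citation, there is no discrepancy to flag; your sketch simply goes further than what the paper records.
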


For the boundary value problem of inhomogeneous linear equations, we define
$$
W^{1,p}_D(\Omega):=\{u\in W^{1,p}(\Omega;\C^3):\mathbf t(u)=0\}.
$$
and
\[
\begin{split}
W^p(\nabla\times,\Omega)&:=\{u\in L^p(\Omega;\C^3):~\nabla\times u\in L^p(\Omega;\C^3)\},\\
W^p(\nabla\cdot,\Omega)&:=\{u\in L^p(\Omega;\C^3):~\nabla\cdot u\in L^p(\Omega;\C)\}
\end{split}
\]
endowed with the norms {
\[\begin{split}
\|u\|_{W^p(\nabla\times,\Omega)}&:=\|u\|_{L^p(\Omega;\C^3)}+\|\nabla\times u\|_{L^p(\Omega;\C^3)},\\
\|u\|_{W^p(\nabla\cdot,\Omega)}&:=\|u\|_{L^p(\Omega;\C^3)}+\|\nabla\cdot u\|_{L^p(\Omega;\C)}.\end{split}\]
}
Then we have

\begin{lemma}\label{thm:linear-inhomog}
Let $2\le p\le 6$ and let {$\varepsilon,\mu\in C^1(\Omega;\C)$} be complex-valued functions with positive real parts. Suppose that $J_e,J_m\in W^p(\nabla\cdot,\Omega)$ and $\nu \cdot J_e|_{\p \Omega}, \nu\cdot J_m|_{\p \Omega}\in W^{1-1/p,p}(\p \Omega)$. There is a discrete subset $\Sigma$ of $\C$ such that for all $\omega\notin \Sigma$ the Maxwell's system
\[
\nabla\times E-i\omega\mu H=J_m,\quad
\nabla\times H+i\omega \varepsilon E=J_e
\]
has a unique solution $(E,H)\in W^{1,p}_D(\Omega)\times W^{1,p}_{\Div}(\Omega)$ satisfying 
\begin{align*}
\|E\|_{W^{1,p}_{\Div}(\Omega)}+\|H\|_{W^{1,p}_{\Div}(\Omega)}&\le C(\|\nu\cdot J_e|_{\p \Omega}\|_{W^{1-1/p,p}(\p \Omega)}+\|\nu\cdot J_m|_{\p \Omega}\|_{W^{1-1/p,p}(\p \Omega)})\\
&\qquad+C(\|J_e\|_{W^{p}(\nabla\cdot,\Omega)}+\|J_m\|_{W^{p}(\nabla\cdot, \Omega)})
\end{align*}
for some constant $C>0$ independent of $J_e$ and $J_m$.
\end{lemma}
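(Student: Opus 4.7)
The plan is to deduce Lemma~\ref{thm:linear-inhomog} from the homogeneous result Lemma~\ref{thm:linear-homog} by subtracting off a particular solution. First I would extend $J_e$ and $J_m$ to compactly supported vector fields on $\R^3$ preserving $W^p(\nabla\cdot)$ regularity; the hypothesis $\nu\cdot J_e|_{\p\Omega},\nu\cdot J_m|_{\p\Omega}\in W^{1-1/p,p}(\p\Omega)$ is precisely what allows the extended distributional divergence to remain in $L^p$ (no surface Dirac mass appears across $\p\Omega$). Then solve the constant-coefficient free-space resolvent problem
\[
\nabla\times E_0-i\omega H_0=J_m,\qquad \nabla\times H_0+i\omega E_0=J_e
\]
on $\R^3$ using a Hodge/Helmholtz decomposition together with Mikhlin-type Fourier multiplier bounds (with an outgoing radiation condition for real $\omega$), producing $(E_0,H_0)\in W^{1,p}_{\mathrm{loc}}(\R^3)^2$ with the desired $W^{1,p}(\Omega)$-estimate in terms of $\|J_{e,m}\|_{W^p(\nabla\cdot,\Omega)}$ and $\|\nu\cdot J_{e,m}\|_{W^{1-1/p,p}(\p\Omega)}$.

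To incorporate the variable coefficients $\varepsilon,\mu\in C^1(\Omega)$, I would view the multiplications by $\varepsilon-1$ and $\mu-1$ as a compact perturbation (using compactness of $W^{1,p}(\Omega)\hookrightarrow L^p(\Omega)$) and apply meromorphic Fredholm theory in the parameter $\omega$, producing a particular solution $(E_1,H_1)\in W^{1,p}(\Omega)^2$ of the full variable-coefficient inhomogeneous system for all $\omega$ outside a discrete set, which can be identified with the set $\Sigma$ from Lemma~\ref{thm:linear-homog}. Writing $(E,H)=(E',H')+(E_1,H_1)$ then reduces the problem to finding a homogeneous solution $(E',H')$ with tangential trace $-\mathbf{t}(E_1)$. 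To apply Lemma~\ref{thm:linear-homog} one must verify $\mathbf{t}(E_1)\in TW^{1-1/p,p}_{\Div}(\p\Omega)$: this follows from the surface identity $\Div(\mathbf{t}(v))=-\nu\cdot(\nabla\times v)|_{\p\Omega}$ applied to $v=E_1$ together with the first Maxwell equation, giving $\Div(\mathbf{t}(E_1))=-i\omega\,\nu\cdot(\mu H_1)|_{\p\Omega}-\nu\cdot J_m|_{\p\Omega}$, a sum of two $W^{1-1/p,p}(\p\Omega)$ terms by the trace theorem and the hypothesis. Uniqueness of $(E,H)$ then follows from uniqueness in Lemma~\ref{thm:linear-homog} applied to the difference of two solutions, and the estimate follows by combining the bounds for $(E_0,H_0)$, the Fredholm correction, and Lemma~\ref{thm:linear-homog}.

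The main obstacle will be constructing the particular solution $(E_1,H_1)$ with sharp enough norm control and verifying that its tangential trace lies in the surface-divergence space demanded by Lemma~\ref{thm:linear-homog}. A related subtlety is ensuring that the exceptional frequency set produced by the meromorphic Fredholm perturbation argument is contained in, rather than strictly larger than, the resonant set $\Sigma$ of the homogeneous problem; this can be arranged by taking $\Sigma$ to be the union of the two exceptional sets, which remains discrete.
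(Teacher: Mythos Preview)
The paper does not prove this lemma; it is quoted directly from \cite[Section~3.1]{AssylbekovZhou2018} and simply invoked. So there is no ``paper's own proof'' to compare against here beyond the citation.

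Your sketch is a reasonable outline of how such a result is typically established: build a particular solution to the inhomogeneous system, then invoke Lemma~\ref{thm:linear-homog} to correct the tangential trace. The computation $\Div(\mathbf t(E_1))=\nu\cdot(\nabla\times E_1)|_{\partial\Omega}=\nu\cdot(i\omega\mu H_1+J_m)|_{\partial\Omega}$ is the right way to see that $\mathbf t(E_1)\in TW^{1-1/p,p}_{\Div}(\partial\Omega)$, and the analytic Fredholm step is standard.

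One point in your write-up is misattributed. You say the hypothesis $\nu\cdot J_{e,m}\in W^{1-1/p,p}(\partial\Omega)$ is ``precisely what allows the extended distributional divergence to remain in $L^p$ (no surface Dirac mass).'' That is not the role of this hypothesis: an extension of a $W^p(\nabla\cdot,\Omega)$ field to $W^p(\nabla\cdot,\R^3)$ exists as soon as the normal trace lies in $W^{-1/p,p}(\partial\Omega)$, which it does automatically. The genuine reason the extra trace regularity $\nu\cdot J_{e,m}\in W^{1-1/p,p}(\partial\Omega)$ is assumed is exactly the one you identify later --- it is what puts $\Div(\mathbf t(E_1))$ and $\Div(\mathbf t(H_1))$ into $W^{1-1/p,p}(\partial\Omega)$, and hence what lets the div--curl--boundary regularity mechanism upgrade the particular solution to $W^{1,p}(\Omega)$ rather than merely $W^p(\nabla\times,\Omega)\cap W^p(\nabla\cdot,\Omega)$. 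You should relocate the justification accordingly; otherwise the strategy is sound.
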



In addition, using the Sobolev embedding $W^{1,p}(\Omega)\inclusion C(\Omega)$ when $p>n$, we obtain
\[\begin{split}
\|fg\|_{W^{1,p}(\Omega)}&=\|fg\|_{L^p(\Omega)}+\|f\nabla g+g\nabla f\|_{L^p(\Omega)}\\
&\leq C\left\{\|f\|_{L^\infty(\Omega)}\left(\|g\|_{L^p(\Omega)}+\|\nabla g\|_{L^p(\Omega)}\right)+\|g\|_{L^\infty(\Omega)}\|\nabla f\|_{L^p(\Omega)}\right\}\\
&\leq C\|f\|_{W^{1,p}(\Omega)}\|g\|_{W^{1,p}(\Omega)}.
\end{split}\]
Therefore, when {$p>3$}, $W^{1,p}(\Omega;\C^3)$ is an algebra and the nonlinear terms in Maxwell's equations \eqref{eqn:ME-SHG} are in appropriate function spaces. \\

\subsection*{Proof of Theorem~\ref{thm::main result 1}}

Suppose $f^{k\omega}\in TW^{1-1/p,p}_{\Div}(\p \Omega)$ with $k=1,2$ such that 
$$\displaystyle\sum_{k=1,2}\|f^{k\omega}\|_{TW^{1-1/p,p}_{\Div}(\p \Omega)}<\epsilon,$$ with $\epsilon>0$ to be determined. By Lemma~\ref{thm:linear-homog}, there is a unique $(E_0^{k\omega},H_0^{k\omega})\in W^{1,p}_{\Div}(\Omega)\times W^{1,p}_{\Div}(\Omega)$ solving
$$
\nabla\times E^{k\omega}_0-ik\omega\mu H^{k\omega}_0=0,\quad \nabla\times H^{k\omega}_0+ik\omega \varepsilon E^{k\omega}_0=0,\quad \mathbf t(E^{k\omega}_0)=f^{k\omega},\quad k=1,2.
$$
and satisfying
$$
\sum_{k=1,2}\|E^{k\omega}_0\|_{W^{1,p}_{\Div}(\Omega)}+\|H^{k\omega}_0\|_{W^{1,p}_{\Div}(\Omega)}\le C\sum_{k=1,2}\|f^{k\omega}\|_{TW^{1-1/p,p}_{\Div}(\p \Omega)}.
$$
Then $(E^\omega,H^\omega, E^{2\omega}, H^{2\omega})$ is a solution to \eqref{eqn:ME-SHG} if and only if $({E^\omega}',{H^\omega}',{E^{2\omega}}', {H^{2\omega}}')$ defined by $(E^{k\omega},H^{k\omega})=(E^{k\omega}_0,H^{k\omega}_0)+({E^{k\omega}}',{H^{k\omega}}')$ for $k=1,2$ satisfies
\begin{equation}\label{eqn:ME-SHG-Dirichlet}
\begin{cases}
\nabla\times {E^\omega}'-i\omega\mu {H^\omega}'=0,\\
\nabla\times {H^\omega}'+i\omega\varepsilon {E^\omega}'+i\omega\chi^{(2)}\left(\overline{E_0^\omega+{E^\omega}'}\right)\cdot\left(E_0^{2\omega}+{E^{2\omega}}'\right)=0,\\
\nabla\times {E^{2\omega}}'-i2\omega\mu {H^{2\omega}}'=0,\\
\nabla\times {H^{2\omega}}'+i2\omega \varepsilon E'+i2\omega \chi^{(2)}\left(E_0^\omega+{E^\omega}'\right)\cdot\left(E_0^\omega+{E^\omega}'\right)=0,\\
\mathbf t({E^{k\omega}}')=0,\qquad k=1,2.
\end{cases}
\end{equation}
By Lemma~\ref{thm:linear-inhomog}, there is a bounded linear operator
$$
\mathcal G_{k\omega}^{\varepsilon,\mu}:W^{1,p}(\Omega;\C^3)\times W^{1,p}(\Omega; \C^3)\to W^{1,p}_D(\Omega)\times W^{1,p}_{\Div}(\Omega)
$$
mapping $(J_e,J_m)\in W^{1,p}(\Omega;\C^3)\times W^{1,p}(\Omega; \C^3)$ to the unique solution $(\widetilde E,\widetilde H)$ of the problem
$$
\nabla\times\widetilde E-ik\omega\mu \widetilde H=J_m,\quad \nabla\times\widetilde H+ik\omega \varepsilon \widetilde E=J_e,\quad \mathbf t(\widetilde E)=0.
$$
Define $X_\delta$ to be the set of $(e^\omega,h^\omega, e^{2\omega}, h^{2\omega})\in W^{1,p}_D(\Omega)\times W^{1,p}_{\Div}(\Omega)\times W^{1,p}_D(\Omega)\times W^{1,p}_{\Div}(\Omega)$ such that
$$
\sum_{k=1,2}\|e^{k\omega}\|_{W^{1,p}(\Omega;\C^3)}+\|h^{k\omega}\|_{W^{1,p}_{\Div}(\Omega)}\le\delta,
$$
where $\delta>0$ will be determined later. Define an operator $A$ on $X_\delta$ as
$$
A(e^\omega, h^\omega, e^{2\omega}, h^{2\omega}):=\left(\mathcal G_\omega^{\varepsilon,\mu}\big(0, i\omega \chi^{(2)} (\overline{E_0^\omega+e^\omega})\cdot(E_0^{2\omega}+e^{2\omega})\big),  \mathcal G_{2\omega}^{\varepsilon,\mu}\big(0, i2\omega \chi^{(2)} (E_0^{\omega}+e^{\omega})\cdot(E_0^{\omega}+e^{\omega})\big)                                                                                                                                                                                                                                                                                                                                                                                                                                                                                                                                                                                                                                                                                                                                                                                                                                                                                                                                                                                                                                                                                                                                                                                                                                                                                                                                                                                                                                                                                                                                                                                                                                                                                                                                                                                                                                                                                                                                                                                                                                                                                                                                                                                                                                                                                                                                                                                                                                                                                                                                                                                                                                                                                                                                                                                                                                                                                                                                                                                                                                                                                                                                                                                                                                                                                                                                                                                                                                                                                                                                                                                                                                                                                                                                                                                                                                                                                                                                                                                                                                                                                                                                                                                                                                                                                                                                                                                                                                                                                                                                                                                                                                                                                                                                                                                                                                                                                                                                                                                                                                                                                                                                                                                                                                                                                                                                                                                                                                                                        \right).
$$

We wish to show that for sufficiently small $\epsilon>0$ and $\delta>0$, depending on the frequency $\omega$, the operator $A$ is a contraction on $X_\delta$.\smallskip

\begin{remark}{\rm
{Note that the operator does not depend on $h^\omega$ and $h^{2\omega}$ in this case. However, it can be generalized to cover the case with second order magnetic nonlinearity. }\\}
\end{remark}

{First, $A$ maps $X_\delta$ into itself.} 
Indeed, when {$p>3$}, 
we have
\begin{align*}
\|A(e^\omega,h^\omega, &e^{2\omega}, h^{2\omega})\|_{W^{1,p}_D(\Omega)\times W^{1,p}_{\Div}(\Omega)\times W^{1,p}_D(\Omega)\times W^{1,p}_{\Div}(\Omega)}\\
&\le C\omega\big(\|\chi^{(2)}(\overline{E_0^\omega+e^\omega})\cdot(E_0^{2\omega}+e^{2\omega})\|_{W^{1,p}(\Omega; \C^3)}+\|\chi^{(2)} (E_0^{\omega}+e^{\omega})\cdot(E_0^{\omega}+e^{\omega})\|_{W^{1,p}(\Omega; \C^3)}\big)\\
&\le C\omega\|E_0^\omega+e^\omega\|_{W^{1,p}(\Omega; \C^3)}\big(\|E_0^{2\omega}+e^{2\omega}\|_{W^{1,p}(\Omega; \C^3)}+\|E_0^\omega+e^\omega\|_{W^{1,p}(\Omega; \C^3)}\big)\\
&\le C\omega\big(\|E_0^\omega\|^2_{W^{1,p}(\Omega; \C^3)}+\|E_0^{2\omega}\|^2_{W^{1,p}(\Omega; \C^3)}+\|e^\omega\|^2_{W^{1,p}(\Omega; \C^3)}+\|e^{2\omega}\|^2_{W^{1,p}(\Omega; \C^3)}\big).
\end{align*}
Therefore,
\begin{equation}\label{eqn:control for A}
\begin{aligned}
\|A(e^\omega,h^\omega, &e^{2\omega}, h^{2\omega})\|_{W^{1,p}_D(\Omega)\times W^{1,p}_{\Div}(\Omega)\times W^{1,p}_D(\Omega)\times W^{1,p}_{\Div}(\Omega)}\\
&\le C\omega\left(\epsilon\sum_{k=1,2}\|f^{k\omega}\|_{TW^{1-1/p,p}_{\Div}(\p\Omega)}+\delta\|(e^\omega, h^\omega, e^{2\omega}, h^{2\omega})\|_{W^{1,p}_D(\Omega)\times W^{1,p}_{\Div}(\Omega)\times W^{1,p}_D(\Omega)\times W^{1,p}_{\Div}(\Omega)}\right)\\
&\le C\omega(\epsilon^{2}+\delta^{2}).
\end{aligned}
\end{equation}
Taking $\epsilon>0$ and $\delta>0$ sufficiently small ensures that $A$ maps $X_\delta$ into itself. \\ \smallskip

{Next, we show that $A$ is a contraction on $X_\delta$.} For $(e^\omega_1, h^\omega_1, e^{2\omega}_1, h^{2\omega}_1), (e^\omega_2, h^\omega_2, e^{2\omega}_2, h^{2\omega}_2)\in X_\delta$, we have
\[\begin{split}
\|i\omega\chi^{(2)}(\overline{E_0^\omega+e_2^\omega})&\cdot(E_0^{2\omega}+e^{2\omega}_2)-i\omega\chi^{(2)}(\overline{E_0^\omega+e_1^\omega})\cdot(E_0^{2\omega}+e^{2\omega}_1)\|_{W^{1,p}(\Omega;\C^3)}\\
&\leq C\omega\|(\overline{E_0^{\omega}+e^\omega_2})\cdot(e_2^{2\omega}-e_1^{2\omega})+(\overline{e^\omega_2-e^\omega_1})\cdot(E^{2\omega}_0+e^{2\omega}_1)\|_{W^{1,p}(\Omega; \C^3)},
\end{split}\]  
and
\[\begin{split}
\|i2\omega\chi^{(2)}({E_0^\omega+e_2^\omega})&\cdot(E_0^{\omega}+e^{\omega}_2)-i2\omega\chi^{(2)}({E_0^\omega+e_1^\omega})\cdot(E_0^{\omega}+e^{\omega}_1)\|_{W^{1,p}(\Omega;\C^3)}\\
&\leq C\omega\| (2 E_0^\omega+e^\omega_2+e^\omega_1)\cdot(e_2^\omega-e_1^\omega) \|_{W^{1,p}(\Omega; \C^3)}.
\end{split}\]  
Together, these imply
\[\begin{split}
\|A(e_2^\omega, h_2^\omega, e_2^{2\omega}, h_2^{2\omega})&-A(e_1^\omega, h_1^\omega, e_1^{2\omega}, h_1^{2\omega})\|_{W^{1,p}_D(\Omega)\times W^{1,p}_{\Div}(\Omega)\times W^{1,p}_D(\Omega)\times W^{1,p}_{\Div}(\Omega)}\\
\leq C\omega &\Big\{(\|E_0^\omega\|_{W^{1,p}(\Omega; \C^3)}+\|e^\omega_2\|_{W^{1,p}(\Omega; \C^3)})\|e_2^{2\omega}-e_1^{2\omega}\|_{W^{1,p}(\Omega; \C^3)}\\
&+\left(\|E_0^{2\omega}+e_1^{2\omega}\|_{W^{1,p}(\Omega; \C^3)}+\|2E_0^\omega+e_2^\omega+e_1^\omega\|_{W^{1,p}(\Omega; \C^3)}\right)\|e_2^\omega-e_1^\omega\|_{W^{1,p}(\Omega; \C^3)}\Big\}\\
\leq C\omega &\left(\sum_{k=1,2}\|E_0^{k\omega}\|_{W^{1,p}(\Omega;\C^3)}+\sum_{k,j=1,2}\|e_j^{k\omega}\|_{W^{1,p}(\Omega;\C^3)}\right)\sum_{k=1,2}\|e^{k\omega}_2-e^{k\omega}_1\|_{W^{1,p}(\Omega;\C^3)}\\
\leq C\omega &(\epsilon+\delta)\|(e^\omega_2, h^\omega_2, e^{2\omega}_2, h^{2\omega}_2)-(e^\omega_1, h^\omega_1, e^{2\omega}_1, h^{2\omega}_1)\|_{W^{1,p}_D(\Omega)\times W^{1,p}_{\Div}(\Omega)\times W^{1,p}_D(\Omega)\times W^{1,p}_{\Div}(\Omega)}.
\end{split}\]
This verifies that $A$ is a contraction when $C\omega(\epsilon+\delta)<1$. \\

Now, using the contraction mapping theorem, there exists a unique fixed point $$({E^\omega}',{H^\omega}', {E^{2\omega}}', {H^{2\omega}}')\in X_\delta$$ of $A$. It solves \eqref{eqn:ME-SHG-Dirichlet} when {$3< p\leq 6$} and $\delta>0, \epsilon>0$ are small enough. Using \eqref{eqn:control for A}, one can see that
\[\begin{split}
\|({E^\omega}',&{H^\omega}', {E^{2\omega}}', {H^{2\omega}}')\|_{W^{1,p}_D(\Omega)\times W^{1,p}_{\Div}(\Omega)\times W^{1,p}_D(\Omega)\times W^{1,p}_{\Div}(\Omega)}\\
&= \|A({E^\omega}',{H^\omega}', {E^{2\omega}}', {H^{2\omega}}')\|_{W^{1,p}_D(\Omega)\times W^{1,p}_{\Div}(\Omega)\times W^{1,p}_D(\Omega)\times W^{1,p}_{\Div}(\Omega)}\\
&\leq C\omega\left(\epsilon\sum_{k=1,2}\|f^{k\omega}\|_{TW^{1-1/p,p}_{\Div}(\p\Omega)}+\delta\|({E^\omega}',{H^\omega}', {E^{2\omega}}', {H^{2\omega}}')\|_{W^{1,p}_D(\Omega)\times W^{1,p}_{\Div}(\Omega)\times W^{1,p}_D(\Omega)\times W^{1,p}_{\Div}(\Omega)}\right).
\end{split}\]
Choosing $C\omega\delta<\frac12$, we obtain the estimate
\[\|({E^\omega}',{H^\omega}', {E^{2\omega}}', {H^{2\omega}}')\|_{W^{1,p}_D(\Omega)\times W^{1,p}_{\Div}(\Omega)\times W^{1,p}_D(\Omega)\times W^{1,p}_{\Div}(\Omega)}\leq C\sum_{k=1,2}\|f^{k\omega}\|_{TW^{1-1/p,p}_{\Div}(\p\Omega)}.\]
Finally, $(E^\omega,H^\omega, E^{2\omega}, H^{2\omega})=(E^\omega_0,H^\omega_0, E^{2\omega}_0, H^{2\omega}_0)+({E^\omega}',{H^\omega}', {E^{2\omega}}', {H^{2\omega}}')$ solves \eqref{eqn:ME-SHG} with $\mathbf t(E^{k\omega})=f^{k\omega}$ ($k=1,2$) and satisfies the estimate
$$
\sum_{k=1,2}\|E^{k\omega}\|_{W^{1,p}_{\Div}(\Omega)}+\|H^{k\omega}\|_{W^{1,p}_{\Div}(\Omega)}\le C\sum_{k=1,2}\|f^{k\omega}\|_{TW^{1-1/p,p}_{\Div}(\p \Omega)}.
$$
The proof of Theorem~\ref{thm::main result 1} is thus complete.



\section{Asymptotics of the admittance map}\label{sec:asymp}
Let $\Omega$ be a bounded domain in $\R^3$ with smooth boundary and let $3<p\le 6$. Suppose that $\varepsilon,\mu\in C^1(\Omega;\C)$ are complex functions with positive real parts and {$\chi^{(2)}\in C^1(\Omega;\C^3)$}. Fix $\omega>0$ outside a discrete set of resonant frequencies. Suppose that $(f^\omega, f^{2\omega})\in TW^{1-1/p,p}_{\Div}(\p \Omega)\times TW^{1-1/p,p}_{\Div}(\p \Omega)$ and $s\in\R$ is small enough. By Theorem~\ref{thm::main result 1}, there is a unique solution $(E^{\omega}_s,H^{\omega}_s, E^{2\omega}_s, H^{2\omega}_s)\in \left(W^{1,p}_{\Div}(\Omega)\right)^4$ of \eqref{eqn:ME-SHG} such that $\mathbf t(E^{k\omega}_s)=sf^{k\omega}$ with $k=1,2$ and
\begin{equation}\label{eqn::estimate for E^s and H^s}
\sum_{k=1,2}\|E^{k\omega}_s\|_{W^{1,p}_{\Div}(\Omega)}+\|H^{k\omega}_s\|_{W^{1,p}_{\Div}(\Omega)}\le C|s|\sum_{k=1,2}\|f^{k\omega}\|_{TW^{1-1/p,p}_{\Div}(\p \Omega)}.
\end{equation}
By Lemma~\ref{thm:linear-homog}, there is a unique $(E_1^\omega,H_1^\omega,E_1^{2\omega}, H_1^{2\omega})\in \left(W^{1,p}_{\Div}(\Omega)\right)^4$ solving 
$$
\nabla\times E^{k\omega}_1-ik\omega\mu H^{k\omega}_1=0,\quad \nabla\times H^{k\omega}_1+ik\omega \varepsilon E^{k\omega}_1=0,\quad \mathbf t(E^{k\omega}_1)=f^{k\omega},\quad k=1,2.
$$
such that
\begin{equation}\label{eqn::estimate for E_1 and H_1}
\sum_{k=1,2}\|E_1^{k\omega}\|_{W^{1,p}_{\Div}(\Omega)}+\|H_1^{k\omega}\|_{W^{1,p}_{\Div}(\Omega)}\le C\sum_{k=1,2}\|f^{k\omega}\|_{TW^{1-1/p,p}_{\Div}(\p \Omega)}.
\end{equation}
Also, by Lemma~\ref{thm:linear-inhomog} there is a unique solution $(E^\omega_2,H^\omega_2,E^{2\omega}_2, H^{2\omega}_2)\in W^{1,p}_{D}(\Omega)\times W^{1,p}_{\Div}(\Omega)\times W^{1,p}_{D}(\Omega)\times W^{1,p}_{\Div}(\Omega)$ for
\[\left\{\begin{split}
&\nabla\times E^{k\omega}_2-ik\omega\mu H^{k\omega}_2=0,\quad k=1,2,\\
&\nabla\times H^\omega_2+i\omega \varepsilon E^\omega_2+i\omega \chi^{(2)}\overline{E^\omega_1}\cdot E^{2\omega}_1=0,\\
&\nabla\times H^{2\omega}_2+i2\omega\varepsilon E^{2\omega}_2+i\omega\chi^{(2)}E^\omega_1\cdot E^\omega_1=0.
\end{split}\right.\]
satisfying
\[\begin{split}
\sum_{k=1,2}\|E^{k\omega}_2\|_{W^{1,p}_{\Div}(\Omega)}+\|H^{k\omega}_2\|_{W^{1,p}_{\Div}(\Omega)}&\le C\left(\|\,|\chi^{(2)}\overline{E^\omega_1}\cdot E^{2\omega}_1\|_{W^{1,p}(\Omega;\C^3)}+\|\chi^{(2)}E^\omega_1\cdot E^\omega_1\|_{W^{1,p}(\Omega;\C^3)}\right)\\
&\le C\left(\|E^\omega_1\|_{W^{1,p}(\Omega;\C^3)}\|E^{2\omega}_1\|_{W^{1,p}(\Omega;\C^3)}+\|E^\omega_1\|_{W^{1,p}(\Omega;\C^3)}^2\right),
\end{split}\]
hence
\begin{equation}\label{eqn:est-E_2-H_2}
\sum_{k=1,2}\|E^{k\omega}_2\|_{W^{1,p}_{\Div}(\Omega)}+\|H^{k\omega}_2\|_{W^{1,p}_{\Div}(\Omega)}\le C\sum_{k=1,2}\|f^{k\omega}\|^2_{TW^{1-1/p,p}_{\Div}(\p \Omega)}.\end{equation}

Now we define $(F^\omega_s, G^\omega_s, F^{2\omega}_s, G^{2\omega}_s)\in W^{1,p}_D(\Omega)\times W^{1,p}_{\Div}(\Omega)\times W^{1,p}_D(\Omega)\times W^{1,p}_{\Div}(\Omega)$ by
\begin{equation}\label{eqn:def-F-G}
(E^{k\omega}_s,H^{k\omega}_s)=s(E_1^{k\omega}+sF^{k\omega}_s,H_1^{k\omega}+sG^{k\omega}_s),\qquad k=1,2.
\end{equation}

First,  by \eqref{eqn::estimate for E^s and H^s} and \eqref{eqn::estimate for E_1 and H_1}, it satisfies
\begin{align*}
\sum_{k=1,2}|s|^{2}\|F^{k\omega}_s&\|_{W^{1,p}(\Omega;\C^3)}+|s|^{2}\|G^{k\omega}_s\|_{W^{1,p}_{\Div}(\Omega)}\\
&\le \sum_{k=1,2}\|E^{k\omega}_s\|_{W^{1,p}_{\Div}(\Omega)}+\|H^{k\omega}_s\|_{W^{1,p}_{\Div}(\Omega)}+|s|\,\|E^{k\omega}_1\|_{W^{1,p}_{\Div}(\Omega)}+|s|\,\|H^{k\omega}_1\|_{W^{1,p}_{\Div}(\Omega)}\\
&\le C|s|\sum_{k=1,2}\|f^{k\omega}\|_{TW^{1-1/p,p}_{\Div}(\p \Omega)}.
\end{align*}
Therefore,
\begin{equation}\label{eqn::est-F-G}
\sum_{k=1,2}|s|\|F^{k\omega}_s\|_{W^{1,p}(\Omega;\C^3)}+|s|\|G^{k\omega}_s\|_{W^{1,p}_{\Div}(\Omega)}\le C\sum_{k=1,2}\|f^{k\omega}\|_{TW^{1-1/p,p}_{\Div}(\p \Omega)}.
\end{equation}

Next, plugging \eqref{eqn:def-F-G} in the nonlinear Maxwell's equations, we obtain
\[\left\{\begin{split}
&\nabla\times F^{k\omega}_s-ik\omega\mu G^{k\omega}_s=0,\qquad k=1,2,\\
&\nabla\times G^\omega_s+i\omega\varepsilon F^\omega_s+i\omega\chi^{(2)}\left(\overline{E^\omega_1}\cdot E^{2\omega}_1+s\overline{E^\omega_1}\cdot F^{2\omega}_s+s\overline{F^\omega_s}\cdot E^{2\omega}_1+s^2\overline{F^\omega_s}\cdot F^{2\omega}_s\right)=0,\\
&\nabla\times G^{2\omega}_s+i2\omega\varepsilon F^{2\omega}_s+i2\omega\chi^{(2)}\left(E^\omega_1\cdot E^\omega_1+2s E^\omega_1\cdot F^\omega_s+s^2 F^\omega_s\cdot F^\omega_s\right)=0,\\
&\mathbf t(F^{k\omega}_s) = 0,\qquad k=1,2.
\end{split}\right.\]

Set 
\begin{equation}\label{eqn:def-P-Q}
P_s^{k\omega}:=F^{k\omega}_s-E_2^{k\omega}, \quad Q_s^{k\omega}:=G^{k\omega}_s-H_2^{k\omega},\qquad k=1,2.\end{equation}
Then $(P_s^\omega, Q_s^\omega, P_s^{2\omega}, Q_s^{2\omega})$ satisfies 
\[\left\{\begin{split}
&\nabla\times P_s^{k\omega}-ik\omega\mu Q_s^{k\omega}=0,\qquad k=1,2,\\
&\nabla\times Q_s^\omega+i\omega\varepsilon P_s^\omega+i\omega\chi^{(2)}\left(s\overline{E^\omega_1}\cdot F^{2\omega}_s+s\overline{F^\omega_s}\cdot E^{2\omega}_1+s^2\overline{F^\omega_s}\cdot F^{2\omega}_s\right)=0,\\
&\nabla\times Q_s^{2\omega}+i2\omega \varepsilon P_s^{2\omega}+i2\omega\chi^{(2)}\left(2s E^\omega_1\cdot F^\omega_s+s^2 F^\omega_s\cdot F^\omega_s\right)=0,\\
&\mathbf t(P_s^{k\omega})=0,\qquad k=1,2,
\end{split}\right.\]
which implies
\begin{equation}\label{eqn:est-P-Q}
\begin{split}
\sum_{k=1,2}\|P_s^{k\omega}&\|_{W^{1,p}(\Omega, \C^3)}+\|Q_s^{k\omega}\|_{W^{1,p}_{\Div}(\Omega)}\\
&\leq C|s|\left(\sum_{k=1,2}\|E^{k\omega}_1\|_{W^{1,p}_{\Div}(\Omega)}\right)\left(\sum_{k=1,2}\|F^{k\omega}_s\|_{W^{1,p}(\Omega, \C^3)}\right)+C|s|^2\sum_{k=1,2}\|F^{k\omega}_s\|^2_{W^{1,p}(\Omega;\C^3)}.
\end{split}
\end{equation}
Then by \eqref{eqn::estimate for E_1 and H_1} and \eqref{eqn::est-F-G}, we have
\[
\sum_{k=1,2}\|P_s^{k\omega}\|_{W^{1,p}(\Omega, \C^3)}+\|Q_s^{k\omega}\|_{W^{1,p}_{\Div}(\Omega)}
\leq C\sum_{k=1,2}\|f^{k\omega}\|_{TW^{1-1/p,p}_{\Div}(\p \Omega)}^2,
\]
which by \eqref{eqn:def-P-Q} and \eqref{eqn:est-E_2-H_2} provides
\begin{equation}\label{eqn:est-F-G-Cf}
\begin{split}
\sum_{k=1,2}\|F^{k\omega}_s\|_{W^{1,p}(\Omega;\C^3)}&+\|G^{k\omega}_s\|_{W^{1,p}_{\Div}(\Omega)}\\
&\leq C\sum_{k=1,2} \|f^{k\omega}\|_{TW^{1-1/p,p}_{\Div}(\p \Omega)}^2+\sum_{k=1,2}\|E^{k\omega}_2\|_{W^{1,p}(\Omega;\C^3)}+\|H^{k\omega}_2\|_{W^{1,p}_{\Div}(\Omega)}\\
&\leq C\sum_{k=1,2} \|f^{k\omega}\|_{TW^{1-1/p,p}_{\Div}(\p \Omega)}^2.
\end{split}
\end{equation}
We plug this back into \eqref{eqn:est-P-Q} to finally obtain
\begin{equation}\label{eqn:est-F-G-Cfs}
\begin{split}
\sum_{k=1,2}\|F_s^{k\omega}-E^{k\omega}_2&\|_{W^{1,p}(\Omega, \C^3)}+\|G_s^{k\omega}-H^{k\omega}_2\|_{W^{1,p}_{\Div}(\Omega)}\\
&\leq C|s|\sum_{k=1,2}\|f^{k\omega}\|_{TW^{1-1/p,p}_{\Div}(\p \Omega)}^3+C|s|^2\sum_{k=1,2}\|f^{k\omega}\|_{TW^{1-1/p,p}_{\Div}(\p \Omega)}^4\\
&\leq C_f|s|
\end{split}\end{equation}
for $|s|$ small enough and some constant $C_f>0$ depending on $(f^\omega, f^{2\omega})$. \\

{ Denote by $\Lambda^{\omega, 2\omega}_{\varepsilon,\mu}$ the admittance map $\Lambda^{\omega,2\omega}_{\varepsilon,\mu,0}$ for linear Maxwell's equations, i.e., 
\[\Lambda^{\omega, 2\omega}_{\varepsilon,\mu}(f^\omega, f^{2\omega})=(\mathbf t(H^\omega_1), \mathbf t(H^{2\omega}_1)).\]
We obtain the following asymptotic expansion of the admittance map.}
\begin{proposition}\label{prop::asymptotics of the admittance map}
Suppose that $(f^\omega, f^{2\omega})\in TW^{1-1/p,p}_{\Div}(\p \Omega)\times TW^{1-1/p,p}_{\Div}(\p \Omega)$ with $3<p\le 6$. Then
\begin{equation}\label{eqn::1st term in asymptotics of the admittance map}
s^{-1}[\Lambda^{\omega,2\omega}_{\varepsilon,\mu,\chi^{(2)}}(sf^\omega, sf^{2\omega})-s\Lambda^{\omega, 2\omega}_{\varepsilon,\mu}(f^\omega, f^{2\omega})]\to 0,
\end{equation}
and
\begin{equation}\label{eqn::2nd term in asymptotics of the admittance map}
s^{-2}[\Lambda^{\omega,2\omega}_{\varepsilon,\mu,\chi^{(2)}}(sf^\omega, sf^{2\omega})-s\Lambda^{\omega, 2\omega}_{\varepsilon,\mu}(f^\omega, f^{2\omega})]\to \left(\mathbf t(H^\omega_2), \mathbf t(H^{2\omega}_2)\right),
\end{equation}
both in $TW^{1-1/p,p}_{\Div}(\p \Omega)\times TW^{1-1/p,p}_{\Div}(\p \Omega)$ as $s\rightarrow0$.
\end{proposition}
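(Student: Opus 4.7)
The plan is to read off both limits directly from the decompositions and estimates already built in the preceding discussion; no new analytic input is needed beyond linearity of $\mathbf t$ and its boundedness.

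First, I would unwind the definitions. By \eqref{eqn:def-F-G}, the solution of the nonlinear system with boundary data $s(f^\omega,f^{2\omega})$ satisfies $H^{k\omega}_s = s H^{k\omega}_1 + s^2 G^{k\omega}_s$ for $k=1,2$. Since $\Lambda^{\omega,2\omega}_{\varepsilon,\mu,\chi^{(2)}}(sf^\omega,sf^{2\omega}) = (\mathbf t(H^\omega_s),\mathbf t(H^{2\omega}_s))$ and $\Lambda^{\omega,2\omega}_{\varepsilon,\mu}(f^\omega,f^{2\omega}) = (\mathbf t(H^\omega_1),\mathbf t(H^{2\omega}_1))$, taking traces and using linearity of $\mathbf t$ yields the key algebraic identity
\[
\Lambda^{\omega,2\omega}_{\varepsilon,\mu,\chi^{(2)}}(sf^\omega,sf^{2\omega}) - s\,\Lambda^{\omega,2\omega}_{\varepsilon,\mu}(f^\omega,f^{2\omega}) = s^2\bigl(\mathbf t(G^\omega_s),\,\mathbf t(G^{2\omega}_s)\bigr).
\]

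For \eqref{eqn::1st term in asymptotics of the admittance map}, dividing by $s$ gives $s\bigl(\mathbf t(G^\omega_s),\mathbf t(G^{2\omega}_s)\bigr)$. The boundedness of the tangential trace $\mathbf t\colon W^{1,p}_{\Div}(\Omega)\to TW^{1-1/p,p}_{\Div}(\p\Omega)$ combined with the uniform-in-$s$ bound \eqref{eqn:est-F-G-Cf} on $\|G^{k\omega}_s\|_{W^{1,p}_{\Div}(\Omega)}$ immediately gives the $O(|s|)$ estimate, which tends to $0$ as $s\to 0$. For \eqref{eqn::2nd term in asymptotics of the admittance map}, dividing by $s^2$ gives $\bigl(\mathbf t(G^\omega_s),\mathbf t(G^{2\omega}_s)\bigr)$, and by \eqref{eqn:def-P-Q} we have $G^{k\omega}_s - H^{k\omega}_2 = Q^{k\omega}_s$. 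Applying $\mathbf t$ and using its continuity together with the estimate \eqref{eqn:est-F-G-Cfs}, namely $\|Q^{k\omega}_s\|_{W^{1,p}_{\Div}(\Omega)}\le C_f|s|$, yields $\mathbf t(G^{k\omega}_s)\to \mathbf t(H^{k\omega}_2)$ in $TW^{1-1/p,p}_{\Div}(\p\Omega)$ as $s\to 0$, which is the desired limit.

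Since all the hard work, namely constructing the fixed-point decomposition $(E^{k\omega}_s,H^{k\omega}_s) = s(E^{k\omega}_1 + sF^{k\omega}_s,\,H^{k\omega}_1 + sG^{k\omega}_s)$, identifying the second-order correction $(E^{k\omega}_2,H^{k\omega}_2)$ as the solution of the inhomogeneous linear Maxwell system driven by $\chi^{(2)}$-couplings of the first-order fields, and bounding the remainder $(P^{k\omega}_s,Q^{k\omega}_s)$ by $C_f|s|$, has already been carried out in the body of Section~\ref{sec:asymp}, the proof of Proposition~\ref{prop::asymptotics of the admittance map} amounts to assembling these ingredients. There is no essential obstacle here; the only delicacy lies in the hypothesis $p>3$, which was needed so that $W^{1,p}(\Omega;\C^3)$ is an algebra and the nonlinear coupling terms $\chi^{(2)}\overline{E^\omega_1}\cdot E^{2\omega}_1$ and $\chi^{(2)}E^\omega_1\cdot E^\omega_1$ lie in $W^{1,p}(\Omega;\C^3)$, so that Lemma~\ref{thm:linear-inhomog} is applicable and the estimate on $(E^{k\omega}_2,H^{k\omega}_2)$ in terms of boundary data holds.
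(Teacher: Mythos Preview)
Your proposal is correct and follows essentially the same approach as the paper's own proof: both derive the identity $\Lambda^{\omega,2\omega}_{\varepsilon,\mu,\chi^{(2)}}(sf^\omega,sf^{2\omega}) - s\Lambda^{\omega,2\omega}_{\varepsilon,\mu}(f^\omega,f^{2\omega}) = s^2(\mathbf t(G^\omega_s),\mathbf t(G^{2\omega}_s))$ from \eqref{eqn:def-F-G}, then use the boundedness of $\mathbf t$ together with \eqref{eqn:est-F-G-Cf} for the first limit and \eqref{eqn:est-F-G-Cfs} for the second.
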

\begin{proof}
From \eqref{eqn:def-F-G} we have
$$
\Lambda^{\omega, 2\omega}_{\varepsilon,\mu,\chi^{(2)}}(sf^\omega, sf^{2\omega})-s\Lambda^{\omega, 2\omega}_{\varepsilon,\mu}(f^\omega, f^{2\omega})=s^{2}\left(\mathbf t(G^{\omega}_s),\mathbf t(G^{2\omega}_s)\right).
$$
Then by boundedness of $\mathbf t$ from $W^{1,p}_{\Div}(\Omega)$ onto $TW^{1-1/p,p}_{\Div}(\p\Omega)$ and by \eqref{eqn:est-F-G-Cf},
\[\begin{split}
\|s^{-1}[\Lambda^{\omega, 2\omega}_{\varepsilon,\mu,\chi^{(2)}}(sf^\omega, sf^{2\omega})-s\Lambda^{\omega, 2\omega}_{\varepsilon,\mu}(f^\omega, f^{2\omega})\|_{TW^{1-1/p,p}_{\Div}(\p\Omega)\times TW^{1-1/p,p}_{\Div}(\p \Omega)}&\le C|s|\sum_{k=1,2}\|G^{k\omega}_s\|_{W^{1,p}_{\Div}(\Omega)}\\
&\le C_f|s|.
\end{split}\]
Taking $s\to 0$, this implies \eqref{eqn::1st term in asymptotics of the admittance map}. Similarly, by \eqref{eqn:est-F-G-Cfs},
\[\begin{split}
\|s^{-2}[\Lambda^{\omega,2\omega}_{\varepsilon,\mu,\chi^{(2)}}(sf^\omega, sf^{2\omega})-&s\Lambda^{\omega, 2\omega}_{\varepsilon,\mu}(f^\omega, f^{2\omega})]-\left(\mathbf t(H^\omega_2), \mathbf t(H^{2\omega}_2)\right)\|_{TW^{1-1/p,p}_{\Div}(\p\Omega)\times TW^{1-1/p,p}_{\Div}(\p\Omega)}\\
&\le C\sum_{k=1,2}\|G^{k\omega}_s-H^{k\omega}_2\|_{W^{1,p}_{\Div}(\Omega)}\\
&\le C_f{ |s|}, 
\end{split}\]
which proves \eqref{eqn::2nd term in asymptotics of the admittance map}.
\end{proof}

By \eqref{eqn::1st term in asymptotics of the admittance map} in the Proposition \ref{prop::asymptotics of the admittance map} and the result in \cite{OlaSomersalo1996}, the uniqueness of {$\mu, \varepsilon \in C^3(\Omega;\C)$} in Theorem \ref{thm::main result 2} is proved from the injectivity of the map 
\[(\mu, \varepsilon)~\mapsto~\Lambda^{\omega, 2\omega}_{\mu,\varepsilon}.\]  

\begin{remark}{\rm
One can see that the proof applies to the case when $\mu$ and $\varepsilon$ also depend on the frequency. That is, given $(\varepsilon^\omega, \mu^\omega, \varepsilon^{2\omega}, \mu^{2\omega})\in \left(C^2(\Omega;\C^3)\right)^4$ with positive real-parts, the admittance map 
\[\Lambda^{\omega, 2\omega}_{\varepsilon^\omega, \mu^\omega, \varepsilon^{2\omega}, \mu^{2\omega}, \chi^{(2)}}:~(f^\omega, f^{2\omega})~\mapsto~\left(\mathbf t(H^\omega), \mathbf t(H^{2\omega})\right)\] 
for the Maxwell's system
\[
\left\{
\begin{split}
&\nabla\times E^{\omega}-i\omega\mu^\omega H^{\omega}=0,\\
&\nabla\times H^{\omega}+i\omega\varepsilon^\omega E^{\omega}+i\omega\chi^{(2)}\overline{E^{\omega}}\cdot{E^{2\omega}}=0,\\
&\nabla\times E^{2\omega}-i2\omega\mu^{2\omega} H^{2\omega}=0,\\
&\nabla\times H^{2\omega}+i2\omega\varepsilon^{2\omega} E^{2\omega}+i2\omega\chi^{(2)}E^{\omega}\cdot E^{\omega}=0\\
&\mathbf t(E^{k\omega}) = f^{k\omega},\qquad k=1,2.
\end{split}
\right.
\]
uniquely determines $(\varepsilon^\omega, \mu^\omega, \varepsilon^{2\omega}, \mu^{2\omega})\in \left(C^2(\Omega;\C^3)\right)^4$ since the linear admittance map after first order approximation decouples to two linear admittance maps for linear Maxwell's equations associated to frequencies $\omega$ and $2\omega$. }
\end{remark}

\section{Proof of Theorem~\ref{thm::main result 2}}\label{sec:chi_2}

In this section we continue the proof of Theorem~\ref{thm::main result 2} by showing $\chi^{(2)}_1=\chi^{(2)}_2$. To that end, we shall use complex geometrical optics solutions in an integral identity~\eqref{eqn:key-identity}.

\subsection{An integral identity}
{ Now, by \eqref{eqn::2nd term in asymptotics of the admittance map} in Proposition~\ref{prop::asymptotics of the admittance map}, we obtain $\mathbf t(H^{k\omega}_{2,1})=\mathbf t(H^{k\omega}_{2,2})$ for $k=1,2$, where $(E^{\omega}_{2,j},H^\omega_{2,j}, E^{2\omega}_{2,j}, H^{2\omega}_{2,j})\in W^{1,p}_{D}(\Omega)\times W^{1,p}_{\Div}(\Omega)\times W^{1,p}_{D}(\Omega)\times W^{1,p}_{\Div}(\Omega)$, $j=1,2$, is the unique solution to
\begin{equation}\label{eqn::nonlinear Maxwell equation with j-indice}
\left\{\begin{split}
&\nabla\times E^{k\omega}_{2,j}-ik\omega\mu H^{k\omega}_{2,j}=0,\qquad k=1,2,\\
&\nabla\times H^\omega_{2,j}+i\omega\varepsilon E^\omega_{2,j}+i\omega \chi^{(2)}_j\overline{E^\omega_1}\cdot E^{2\omega}_1=0,\\
&\nabla\times H^{2\omega}_{2,j}+i2\omega \varepsilon E^{2\omega}_{2,j}+i2\omega \chi^{(2)}_j E^\omega_1\cdot E^\omega_1=0,\\
&\mathbf t(E^{k\omega}_{2,j})=0,\qquad k=1,2.
\end{split}
\right.\end{equation}
with $(E^\omega_1,H^\omega_1, E^{2\omega}_1, H^{2\omega}_1)\in W^{1,p}_{\Div}(\Omega)\times W^{1,p}_{\Div}(\Omega)\times W^{1,p}_{\Div}(\Omega)\times W^{1,p}_{\Div}(\Omega)$ is a solution to
\begin{equation}\label{eqn::linear Maxwell equation with epsilon and mu}
\nabla \times E^{k\omega}_1-ik\omega\mu H^{k\omega}_1=0,\quad
\nabla\times H^{k\omega}_1+ik\omega \varepsilon E^{k\omega}_1=0,\qquad k=1,2.
\end{equation}
satisfying $\mathbf t(E^{k\omega}_1)=f^{k\omega}$ with $k=1,2$. Let $(\wt E^\omega, \wt H^\omega, \wt E^{2\omega}, \wt H^{2\omega}) \in W^{1,p}_{\Div}(\Omega)\times W^{1,p}_{\Div}(\Omega)\times W^{1,p}_{\Div}(\Omega)\times W^{1,p}_{\Div}(\Omega)$ be a solution to
\begin{equation}\label{eqn::linear Maxwell equation with bar-epsilon and bar-mu}
\nabla\times \wt E^{k\omega}-ik\omega\overline \mu \wt H^{k\omega}=0,\quad
\nabla\times \wt H^{k\omega}+ik\omega\overline \varepsilon \wt E^{k\omega}=0,\qquad k=1,2.
\end{equation}
Using integration by parts,
\[\begin{split}
\sum_{k=1,2}\int_{\p\Omega}\mathbf t(H^{k\omega}_2)\cdot(\nu\times \mathbf t(\overline{\wt E^{k\omega}}))~dS =& \sum_{k=1,2}\int_\Omega \nabla\times H^{k\omega}_2\cdot \overline{\wt E^{k\omega}}-H^{k\omega}_2\cdot \nabla\times \overline{\wt E^{k\omega}}~dx\\
=& \int_\Omega \sum_{k=1,2} -ik\omega\varepsilon E^{k\omega}_2\cdot\overline{\wt E^{k\omega}}-i\omega\big(\chi^{(2)}\cdot \overline{\wt E^{\omega}}\big)\left(\overline{E_1^{\omega}}\cdot E^{2\omega}_1\right)\\
&-i2\omega \big(\chi^{(2)}\cdot \overline{\wt E^{2\omega}}\big)\big({E_1^{\omega}}\cdot E^{\omega}_1\big)-\sum_{k=1,2} H^{k\omega}_2\cdot \big(\overline{ik\omega\overline\mu \wt H^{k\omega}}\big)~dx,
\end{split}\]
then
\[\begin{split}
\sum_{k=1,2}\int_{\p\Omega}\mathbf t(H^{k\omega}_2)\cdot(\nu\times \mathbf t(\overline{\wt E^{k\omega}}))~dS =& \int_\Omega\sum_{k=1,2} E^{k\omega}_2\cdot\big(\overline{-\nabla\times \wt H^{k\omega}})-i\omega\big(\chi^{(2)}\cdot \overline{\wt E^{\omega}}\big)\left(\overline{E_1^{\omega}}\cdot E^{2\omega}_1\right)\\
&-i2\omega \big(\chi^{(2)}\cdot \overline{\wt E^{2\omega}}\big)\big({E_1^{\omega}}\cdot E^{\omega}_1\big)+\sum_{k=1,2} \big(\nabla\times E^{k\omega}_2\big)\cdot\overline{\wt H^{k\omega}}~dx\\
=& \int_{\p\Omega}\sum_{k=1,2}\mathbf t(E^{k\omega}_2)\cdot (\nu\times\overline{\mathbf t(\wt H^{k\omega})})~dS\\
&+\int_\Omega -i\omega\big(\chi^{(2)}\cdot \overline{\wt E^{\omega}}\big)\left(\overline{E_1^{\omega}}\cdot E^{2\omega}_1\right)-i2\omega \big(\chi^{(2)}\cdot \overline{\wt E^{2\omega}}\big)\big({E_1^{\omega}}\cdot E^{\omega}_1\big)~dx\\
=&\int_\Omega -i\omega\big(\chi^{(2)}\cdot \overline{\wt E^{\omega}}\big)\left(\overline{E_1^{\omega}}\cdot E^{2\omega}_1\right)-i2\omega \big(\chi^{(2)}\cdot \overline{\wt E^{2\omega}}\big)\big({E_1^{\omega}}\cdot E^{\omega}_1\big)~dx
\end{split}\]
For $\chi^{(2)}_j$, $j=1,2$,  we have 
\[\begin{split}
\sum_{k=1,2}\int_{\p\Omega}\mathbf t(H^{k\omega}_{2,j})\cdot(\nu\times \mathbf t(\overline{\wt E^{k\omega}}))~dS = -i\omega\int_\Omega \chi^{(2)}_j\cdot \Big[\left(\overline{E_1^{\omega}}\cdot E^{2\omega}_1\right)\overline{\wt E^{\omega}}+2\big({E_1^{\omega}}\cdot E^{\omega}_1\big)\overline{\wt E^{2\omega}}\Big]~dx.
\end{split}\]
Then by that $\mathbf t(H^{k\omega}_{2,1})=\mathbf t(H^{k\omega}_{2,2})$ for $k=1,2$, we have the key identity
\begin{equation}\label{eqn:key-identity}
\int_\Omega \left(\chi^{(2)}_1-\chi^{(2)}_2\right)\cdot \Big[\left(\overline{E_1^{\omega}}\cdot E^{2\omega}_1\right)\overline{\wt E^{\omega}}+2\big({E_1^{\omega}}\cdot E^{\omega}_1\big)\overline{\wt E^{2\omega}}\Big]~dx=0
\end{equation}
for solutions to \eqref{eqn::linear Maxwell equation with epsilon and mu} and \eqref{eqn::linear Maxwell equation with bar-epsilon and bar-mu}. 

\subsection{Construction of complex geometrical optics solutions}

In this section, we construct appropriated solutions to the linear equations \eqref{eqn::linear Maxwell equation with epsilon and mu} and \eqref{eqn::linear Maxwell equation with bar-epsilon and bar-mu} in order to plug in the integral identity \eqref{eqn:key-identity}. 
We adopt the approach in \cite{OPS1993}. For the completeness of the work, we present the main steps here. 

Given $\zeta\in\C^3$ such that $\zeta\cdot\zeta=\kappa^2$, the Faddeev kernel is defined as
\begin{equation}\label{eqn:Faddeev_kernel}g_\zeta(x)=(2\pi)^{-3}\int\frac{e^{i\xi\cdot x}}{|\xi|^2-2i\zeta\cdot\xi}~d\xi.\end{equation}
Then $\mathscr{G}(x)=e^{\zeta\cdot x}g_\zeta(x)$ is a fundamental solution to the operator $-(\Delta+\kappa^2)$ and 
\[\mathbf G=\frac{\kappa^2}{\omega}\left(\begin{array}{cc}1+\frac{\nabla\nabla\cdot}{\kappa^2} & \frac{i}{\omega\varepsilon_0}\nabla\times \\-\frac{i}{\omega\mu_0}\nabla\times & 1+\frac{\nabla\nabla\cdot}{\kappa^2}\end{array}\right)\mathscr{G}\]
is a fundamental solution to the Maxwell's operator $\mathscr{L}-\omega$ in vacuum, where 
\[\mathscr{L}=\left(\begin{array}{cc}0 & \frac{i}{\varepsilon_0}\nabla\times \\-\frac{i}{\mu_0}\nabla\times & 0\end{array}\right),\qquad \kappa^2=\omega^2\mu_0\varepsilon_0.\]
Here the operators $\nabla\nabla\cdot$ and $\nabla\times$ are interpreted as matrix operators acting on scalar functions
\[\nabla\nabla\cdot=\left(\begin{array}{ccc}\partial_{11} & \partial_{12} & \partial_{13} \\\partial_{21} & \partial_{22} & \partial_{23} \\\partial_{31} & \partial_{32} & \partial_{33}\end{array}\right),\qquad 
\nabla\times =\left(\begin{array}{ccc}0 & -\partial_3 & \partial_2 \\\partial_3 & 0 & -\partial_1 \\-\partial_2 & \partial_1 & 0\end{array}\right).\]
As a result, Maxwell's equations for $(E, H)$ in the whole space
\[\nabla\times E-i\omega\mu H=0,\quad \nabla\times H+i\omega\varepsilon E=0\]
read
\begin{equation}\label{eqn:ME_L}(\mathscr{L}-\omega)\left(\begin{array}{c}E \\H\end{array}\right)=\omega\left(\begin{array}{c}\wt\varepsilon E \\\wt\mu H\end{array}\right)\end{equation}
where $\wt\varepsilon=(\varepsilon-\varepsilon_0)/\varepsilon_0$ and $\wt\mu=(\mu-\mu_0)/\mu_0$. Given above fundamental solutions, the solutions we are looking for in turn solve the integral equation
\begin{equation}\label{eqn:ME_int}\left(\begin{array}{c}E \\H\end{array}\right)=\left(\begin{array}{c}E_0 \\H_0\end{array}\right)+\omega\int \mathbf G(\cdot-y)\left(\begin{array}{c}\wt\varepsilon(y) E(y) \\\wt\mu(y) H(y)\end{array}\right)~dy\end{equation}
for some vacuum solutions $(E_0, H_0)$ to 
\[(\mathscr L-\omega)\left(\begin{array}{c}E_0 \\H_0\end{array}\right)=0.\]
Specifically, we choose 
\[E_0(x)=e^{\zeta\cdot x}A_\zeta,\qquad H_0(x)=e^{\zeta\cdot x}B_\zeta\]
where $\zeta\times A_\zeta=\omega \mu_0 B_\zeta$ and $\zeta\times B_\zeta=-\omega\varepsilon_0 A_\zeta$. In particular, if $\zeta\cdot A_\zeta=0$, one only needs to choose $A_\zeta=\frac1{-\omega\varepsilon_0}\zeta\times B_\zeta$. \\

By \cite[Theorem 2.5]{OPS1993}, for {$\mu, \varepsilon\in C^3(\Omega, \C)$}, the equation \eqref{eqn:ME_L}, or equivalently \eqref{eqn:ME_int}, admits a unique solution $(E, H)$ of the form 
\[E(x)=e^{\zeta\cdot x}(A_\zeta+R(x)),\qquad H(x)=e^{\zeta\cdot x}(B_\zeta+Q(x)),\]
with $R(x)$ and $Q(x)$ in $L^2_{-\delta}(\R^3; \C^3)$ for $\delta\in [1/2, 1]$, where
\[L^2_\delta(\R^3;\C^n):=\left\{ f\in L^2_{\textrm{loc}}(\R^3;\C^n)~\Big|~\|f\|_\delta=\left(\int_{\R^3}(1+|x|^2)^\delta|f(x)|^2~dx\right)^{1/2}<\infty\right\}.\]
Then by \cite[Lemma 2.4]{OPS1993}, $(R(x), Q(x))$ satisfies the equation
\begin{equation}\label{eqn:RQ_int}
\left(\begin{array}{c}R \\Q\end{array}\right)=\left(\begin{array}{c}J \\K\end{array}\right)+M^{-1}G_\zeta\left(M(V-q)\left(\begin{array}{c}R \\Q\end{array}\right)\right)
\end{equation}
where 
\begin{equation}\label{eqn:JK_int}
\left(\begin{array}{c}J \\K\end{array}\right)=(M^{-1}M_0-1)\left(\begin{array}{c}A_\zeta \\B_\zeta\end{array}\right)+M^{-1}G_\zeta\left(M(V-q)\left(\begin{array}{c}A_\zeta \\B_\zeta\end{array}\right)\right),\end{equation}
\[M=\left(\begin{array}{cc}\varepsilon^{1/2} & 0 \\0 & \mu^{1/2}\end{array}\right),\qquad M_0=\left(\begin{array}{cc}\varepsilon_0^{1/2} & 0 \\0 & \mu_0^{1/2}\end{array}\right),\]
and
\[V=\left(\begin{array}{cc}\omega^2(\mu\varepsilon-\mu_0\varepsilon_0)+\nabla^2\log\varepsilon & \frac{i\omega}{\varepsilon}\nabla(\mu\varepsilon)\times \\-\frac{i\omega}{\mu}\nabla(\mu\varepsilon)\times & \omega^2(\mu\varepsilon-\mu_0\varepsilon_0)+\nabla^2\log\mu\end{array}\right),\quad q=\left(\begin{array}{cc}\frac{\Delta\varepsilon^{1/2}}{\varepsilon^{1/2}} & 0 \\0 & \frac{\Delta\mu^{1/2}}{\mu^{1/2}}\end{array}\right).\]
The operator $G_\zeta$ is the convolution of components with the Faddeev kernel $g_\zeta$ defined in \eqref{eqn:Faddeev_kernel}. It is a bounded operator from $L^2_\delta(\R^3;\C^6)$ to $L^2_{-\delta}(\R^3;\C^6)$ for $\delta\in (1/2, 1)$ with the decaying property (see \cite[Proposition 2.1]{OPS1993})
\begin{equation}\label{eqn:Faddeev_decay}
\|G_\zeta f\|_{-\delta}\leq \frac C{|\zeta|}\|f\|_\delta, \qquad f\in L^2_\delta(\R^3;\C^6).\end{equation}
The proof of such decay can be found in \cite{SylvesterUhlmann1987}. 

Denote $E=e^{\zeta\cdot x}\mathbf e$ and $H=e^{\zeta\cdot x}\mathbf h$, that is, $\mathbf e(x)=A_\zeta+R(x)$ and $\mathbf h(x)=B_\zeta+Q(x)$. By \eqref{eqn:RQ_int} and \eqref{eqn:JK_int}, we have
\begin{equation}\label{eqn:eh_int}
M\left(\begin{array}{c}\mathbf e \\ \mathbf h \end{array}\right)=M_0\left(\begin{array}{c}A_\zeta \\B_\zeta\end{array}\right)+G_\zeta\left(M(V-q)\left(\begin{array}{c}\mathbf e \\\mathbf h\end{array}\right)\right).\end{equation}
Using the decaying estimate \eqref{eqn:Faddeev_decay} for $G_\zeta$, for $|\zeta|$ large, \eqref{eqn:eh_int} admits a solution by Neumann series
\begin{equation}\label{eqn:eh}\left(\begin{array}{c}\mathbf e \\\mathbf h\end{array}\right)=M^{-1}M_0\left(\left(\begin{array}{c}A_\zeta \\B_\zeta\end{array}\right)+\left(\begin{array}{c}\wt R \\\wt Q\end{array}\right)\right)=\left(\begin{array}{c}\varepsilon_0^{1/2}\varepsilon^{-1/2}(A_\zeta+\wt R) \\\mu_0^{1/2}\mu^{-1/2}(B_\zeta+\wt Q)\end{array}\right)\end{equation}
where $\wt R, \wt Q\in L^2_{-\delta}(\R^3;\C^3)$ satisfying
\[\|\wt R\|_{-\delta}, \|\wt Q\|_{-\delta}\leq \frac {C (|A_\zeta|+|B_\zeta|)}{|\zeta|}, \qquad \delta\in (1/2, 1)\]
where $C>0$ is independent of $|\zeta|$. Note that $R$ and $Q$ do not satisfy this decaying property since
\[\left(\begin{array}{c}R \\Q\end{array}\right)=\left(\begin{array}{c}\left(\varepsilon_0^{1/2}\varepsilon^{-1/2}-1\right)A_\zeta+\varepsilon_0^{1/2}\varepsilon^{-1/2}\wt R \\\left(\mu_0^{1/2}\mu^{-1/2}-1\right)B_\zeta+\mu_0^{1/2}\mu^{-1/2}\wt Q\end{array}\right).\]

Moreover, by \cite[Lemma 2.11 (b)]{Nachman1988},
 we have that the solution $(E, H)=(e^{\zeta\cdot x}\mathbf e, e^{\zeta\cdot x}\mathbf h)$, with $(\mathbf e, \mathbf h)$ given by \eqref{eqn:eh}, belongs to $H^2(\Omega; \C^3)\times H^2(\Omega; \C^3)$, and 
\begin{equation}\label{eqn:wtRQ_Hs}\|\wt R\|_{H^s(\Omega;\C^3)}, \|\wt Q\|_{H^s(\Omega;\C^3)}\leq \frac {C (|A_\zeta|+|B_\zeta|)}{|\zeta|^{1-s}},\qquad s\in[0,1].\end{equation}
By Sobolev embedding, we have $(E, H)\in W^{1,p}(\Omega;\C^3)\times W^{1,p}(\Omega;\C^3)$ for $2\leq p\leq 6$. Then by Maxwell's equations and $\Div(\mathbf t(E))=\nu\cdot (\nabla\times E)$, we obtain that $(E, H)\in W_{\Div}^{1,p}(\Omega)\times W_{\Div}^{1,p}(\Omega)$. Finally, one can obtain the estimates
\begin{equation}\label{eqn:wtRQ_Lp}\|\wt R\|_{L^p(\Omega;\C^3)}, \|\wt Q\|_{L^p(\Omega;\C^3)}\leq \frac {C(|A_\zeta|+|B_\zeta|)}{|\zeta|^{\frac{6-p}{2p}}}\end{equation}
using the inequality $0\leq \frac{3p-6}{2p}\leq 1$, Sobolev embedding and \eqref{eqn:wtRQ_Hs}. 
To summarize, we obtain 
\begin{proposition}\label{prop:CGO}
Let $\Omega$ be a bounded domain in $\R^3$ with smooth boundary and $2\leq p\leq 6$. Suppose that {$\varepsilon, \mu\in C^3(\Omega;\C)$} with positive real parts in $\Omega$. Then for $\zeta\in \C^3$ with $\zeta\cdot\zeta=\kappa^2=\omega^2\mu_0\varepsilon_0$, and $A_\zeta, B_\zeta$ satisfying $\zeta\times A_\zeta=\omega \mu_0 B_\zeta$ and  $\zeta\times B_\zeta=-\omega\varepsilon_0 A_\zeta$, with $|\zeta|$ large enough, the Maxwell's equations
\[\nabla\times E=i\omega\mu H, \qquad \nabla\times H=-i\omega\varepsilon E\]
has a solution $(E, H)\in W^{1,p}_{\Div}(\Omega;\C^3)\times W^{1,p}_{\Div}(\Omega;\C^3)$ of the form
\[E(x)=e^{\zeta\cdot x}\varepsilon_0^{1/2}\varepsilon^{-1/2}(A_\zeta+\wt R),\quad H(x)=e^{\zeta\cdot x}\mu_0^{1/2}\mu^{-1/2}(B_\zeta+\wt Q)\]
where $\wt R, \wt Q$ satisfy \eqref{eqn:wtRQ_Lp}.
\end{proposition}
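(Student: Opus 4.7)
The plan is to follow the Ola--P\"aiv\"arinta--Somersalo scheme from \cite{OPS1993}. First I would extend $\varepsilon$ and $\mu$ to all of $\R^3$ while preserving the $C^3$ regularity and the positivity of their real parts, arranging that $\varepsilon\equiv\varepsilon_0$ and $\mu\equiv\mu_0$ outside a large ball containing $\overline{\Omega}$. The Maxwell's system is then recast in the form \eqref{eqn:ME_L}, and solutions are sought in the ansatz $E=e^{\zeta\cdot x}\mathbf e$, $H=e^{\zeta\cdot x}\mathbf h$ with the asymptotic behavior $\mathbf e\to A_\zeta$, $\mathbf h\to B_\zeta$ at infinity. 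Using the Faddeev Green's function $g_\zeta$ defined in \eqref{eqn:Faddeev_kernel} together with the vectorial fundamental solution $\mathbf G$ described above, the problem is reduced to the integral equation \eqref{eqn:RQ_int} for the remainder $(R,Q)$.

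The second step is the algebraic reduction: after conjugation by the diagonal matrix $M=\mathrm{diag}(\varepsilon^{1/2},\mu^{1/2})$, the first-order terms originating from $\nabla\log\varepsilon,\nabla\log\mu$ are absorbed, and the integral equation takes the cleaner form \eqref{eqn:eh_int} for $(\mathbf e,\mathbf h)$, equivalently for the conjugated remainders $(\wt R,\wt Q)$. This is precisely the reason $C^3$ regularity of $\varepsilon,\mu$ is imposed, since the matrices $V$ and $q$ involve second derivatives of $\log\varepsilon$ and $\log\mu$ that must be at least continuous. The potential $M(V-q)$ is then a bounded multiplication operator with compact support, and the Sylvester--Uhlmann decay estimate \eqref{eqn:Faddeev_decay} makes the composition $G_\zeta M(V-q)$ a contraction on $L^2_{-\delta}$ for $\delta\in(1/2,1)$ provided $|\zeta|$ is large enough. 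A Neumann series argument then produces the unique solution $(\wt R,\wt Q)\in L^2_{-\delta}$ with the bound $\|\wt R\|_{-\delta}+\|\wt Q\|_{-\delta}\le C(|A_\zeta|+|B_\zeta|)/|\zeta|$, giving the representation \eqref{eqn:eh}.

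The third step is to upgrade the weighted $L^2$ control to the desired $W^{1,p}_{\Div}$ bound. Invoking \cite[Lemma 2.11(b)]{Nachman1988}, which provides mapping properties of $G_\zeta$ between weighted Sobolev spaces, I would derive the interpolated estimate \eqref{eqn:wtRQ_Hs} for $s\in[0,1]$. The Sobolev embedding $H^{(3p-6)/(2p)}(\Omega)\hookrightarrow L^p(\Omega)$, valid for $2\le p\le 6$, then yields \eqref{eqn:wtRQ_Lp}. Once $E,H\in W^{1,p}(\Omega;\C^3)$ is established, the inclusion into $W^{1,p}_{\Div}(\Omega)$ follows from Maxwell's equations and the identity $\Div(\mathbf t(E))=\nu\cdot(\nabla\times E)=i\omega\mu\,\nu\cdot H$, which lies in $W^{1-1/p,p}(\p\Omega)$ by the trace theorem.

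The main obstacle is the sharpness of the $|\zeta|^{-(6-p)/(2p)}$ decay in \eqref{eqn:wtRQ_Lp}; this is governed by the interpolation exponent between the $|\zeta|^{-1}$ gain in $L^2$ and the parasitic $|\zeta|^0$ loss at the $H^1$ level, and it degenerates exactly at $p=6$, where the factor becomes $1$. This asymmetry between $E^\omega$ and $E^{2\omega}$ decay rates, rather than the construction itself, is what will dictate the admissible choice of $p$ when these CGOs are later substituted into the integral identity \eqref{eqn:key-identity}.
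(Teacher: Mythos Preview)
Your proposal is correct and follows essentially the same approach as the paper: the reduction to the conjugated integral equation \eqref{eqn:eh_int} via the Ola--P\"aiv\"arinta--Somersalo scheme, the Neumann series driven by the Faddeev estimate \eqref{eqn:Faddeev_decay}, the Sobolev upgrade through \cite[Lemma~2.11(b)]{Nachman1988} and the embedding $H^{(3p-6)/(2p)}\hookrightarrow L^p$, and the final passage to $W^{1,p}_{\Div}$ via $\Div(\mathbf t(E))=\nu\cdot(\nabla\times E)$ are all exactly what the paper does. Your explicit mention of extending $\varepsilon,\mu$ to compact perturbations of $\varepsilon_0,\mu_0$ on $\R^3$ and your remark on the role of $C^3$ regularity are details the paper leaves implicit, but they do not constitute a different route.
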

{\begin{remark}
In the following we choose $\zeta$ and $A_\zeta$ satisfying $\zeta\cdot\zeta=\kappa^2$ and $\zeta\cdot A_\zeta=0$. Then $\zeta\times (\zeta\times A_\zeta)=-\kappa^2 A_\zeta$, which is equivalent to $\zeta\times A_\zeta=\omega \mu_0 B_\zeta$ and  $\zeta\times B_\zeta=-\omega\varepsilon_0 A_\zeta$.
\end{remark}}

\subsection{Proof of Theorem~\ref{thm::main result 2}: Part II }

Recall that we assume $3<p<6$. For arbitrary $\xi\in \R^3\backslash\{0\}$, choose coordinates such that $\xi=\xi_1\bf e_1$. Set
\[\begin{split}\zeta_1^\omega&=i\frac{\xi_1}{2}{\bf e_1}-\sqrt{\frac{\xi_1^2}{4}+\tau^2}~{\bf e_2}+i\sqrt{\tau^2-\kappa^2}~{\bf e_3}\sim \tau(-{\bf e_2}+i{\bf e_3}).\\
\zeta_1^{2\omega}&=-i\xi_1{\bf e_1}+2\sqrt{\frac{\xi_1^2}{4}+\tau^2}~{\bf e_2}+2i\sqrt{\tau^2-\kappa^2}~{\bf e_3}\sim \tau(2{\bf e_2}+2i{\bf e_3}),\\
\wt \zeta^\omega&=-i\frac{\xi_1}{2}{\bf e_1}-\sqrt{\frac{\xi_1^2}{4}+\tau^2}~{\bf e_2}+i\sqrt{\tau^2-\kappa^2}~{\bf e_3}\sim \tau(-{\bf e_2}+i{\bf e_3}),\\
\wt\zeta^{2\omega}&=i\xi_1{\bf e_1}+2\sqrt{\frac{\xi_1^2}{4}+\tau^2}~{\bf e_2}+2i\sqrt{\tau^2-\kappa^2}~{\bf e_3}\sim \tau(2{\bf e_2}+2i{\bf e_3}).
\end{split}\]
Then we have
\[\zeta_1^\omega\cdot\zeta_1^\omega=\wt\zeta^\omega\cdot\wt\zeta^\omega=\kappa^2,\quad \zeta_1^{2\omega}\cdot\zeta_1^{2\omega}=\wt\zeta^{2\omega}\cdot \wt\zeta^{2\omega} =4\kappa^2=(2\omega)^2\mu_0\varepsilon_0\]
and 
$$
\overline{\zeta_1^\omega}+\zeta_1^{2\omega}+\overline{\wt\zeta^\omega}=-i\xi,\quad 2\zeta_1^\omega + \overline{\wt\zeta^{2\omega}} = 0.
$$

{
Then we pick
\[
A_1^\omega={\bf e_1}+ \frac{-i\frac{\xi_1}{2}-i\sqrt{\tau^2-\kappa^2}}{-\sqrt{\frac{\xi_1^2}{4}+\tau^2}}{\bf e_2}+{\bf e_3},\qquad
A_1^{2\omega}={\bf e_1}+{\bf e_2}+\frac{i\frac{\xi_1}{2}-\sqrt{\frac{\xi_1^2}{4}+\tau^2}}{i\sqrt{\tau^2-\kappa^2}}{\bf e_3},
\]
and $\wt A^{2\omega}=0$. 
One can verify 
\[\zeta_1^\omega\cdot A_1^\omega=\zeta_1^{2\omega}\cdot A_1^{2\omega}=\wt\zeta^{2\omega}\cdot\wt A^{2\omega}=0.\]

For $\wt A^\omega$, we have two choices
\[\wt A^{\omega}={\bf e_2}+ \frac{\sqrt{\frac{\xi_1^2}{4}+\tau^2}}{i\sqrt{\tau^2-\kappa^2}}{\bf e_3}\quad\textrm{ or }\quad\wt A^{\omega}={\bf e_1}+{\bf e_2}+\frac{\frac{i\xi_1}{2}+\sqrt{\frac{\xi_1^2}{4}+\tau^2}}{i\sqrt{\tau^2-\kappa^2}}{\bf e_3}.\]
Then $\wt A^\omega\cdot\wt\zeta^\omega=0$.

Then by Proposition \ref{prop:CGO}, we have solutions to the linear equations \eqref{eqn::nonlinear Maxwell equation with j-indice} and \eqref{eqn::linear Maxwell equation with bar-epsilon and bar-mu} given by 
\[\begin{split}E^\omega_1&=e^{\zeta_1^\omega\cdot x}\varepsilon_0^{1/2}\varepsilon^{-1/2}(A_1^\omega+R_1^\omega), \\
E^{2\omega}_1&=e^{\zeta_1^{2\omega}\cdot x}\varepsilon_0^{1/2}\varepsilon^{-1/2}(A_1^{2\omega}+R_1^{2\omega}),\\
\wt E^\omega&=e^{\wt \zeta^\omega\cdot x}\varepsilon_0^{1/2}\varepsilon^{-1/2}(\wt A^\omega+\wt R^\omega),\\
\wt E^{2\omega}&=e^{\wt\zeta^{2\omega}\cdot x}\varepsilon_0^{1/2}\varepsilon^{-1/2}(\wt A^{2\omega}+\wt R^{2\omega}),
\end{split}\]
where 
\[\|R_1^\omega\|_{L^p(\Omega;\C^3)}, \|R_1^{2\omega}\|_{L^p(\Omega;\C^3)}, \|\wt R^\omega\|_{L^p(\Omega;\C^3)}, \|\wt R^{2\omega}\|_{L^p(\Omega;\C^3)} \leq \frac C{\tau^{\frac{6-p}{2p}}}.\]
Since we assume $p<6$, plugging these solutions into the integral identity \eqref{eqn:key-identity} and using the generalized H\"older's inequality and above decay property for the terms involving remainders, we obtain as $\tau\rightarrow\infty$
\[\int e^{-i\xi\cdot x} a \chi_\Omega\left(\chi^{(2)}_1-\chi^{(2)}_2\right)\cdot ({\bf e_2}+i{\bf e_3})~dx=0\]
and
\[\int e^{-i\xi\cdot x} a \chi_\Omega\left(\chi^{(2)}_1-\chi^{(2)}_2\right)\cdot ({\bf e_1}+{\bf e_2}+i{\bf e_3})~dx=0\]
where ${\bf e_1}=\hat\xi=\xi/|\xi|$, and ${\bf e_2}$ and ${\bf e_3}$ are two orthogonal directions that are perpendicular to ${\bf e_1}$.
Here and in what follows, $a:=\varepsilon_0^{3/2}|\varepsilon|^{-1}\overline{\varepsilon}^{-1/2}$. 
Hence, we obtain 
\begin{equation}\label{eqn:FT_1}\mathcal F\left(a \chi_\Omega(\chi^{(2)}_1-\chi^{(2)}_2)\right)(\xi)\cdot ({\bf e_2}+i{\bf e_3})=0\quad\textrm{ and }\quad \mathcal F\left(a \chi_\Omega(\chi^{(2)}_1-\chi^{(2)}_2)\right)(\xi)\cdot {\bf e_1}=0\end{equation}
for all $\xi\in\R^3\backslash\{0\}$, where $\mathcal F$ is the Fourier transform. 

Then we switch the ${\bf e_2}$ and $\bf e_3$ components in $\zeta_1^\omega$, $\zeta_1^{2\omega}$, $\wt\zeta^\omega$, $\wt\zeta^{2\omega}$, $A_1^\omega$, $A^{2\omega}_1$
and therein choose 
\[\wt A^{\omega}= \frac{\sqrt{\frac{\xi_1^2}{4}+\tau^2}}{i\sqrt{\tau^2-\kappa^2}}{\bf e_2}+{\bf e_3}.\]
Similar to above, we derive
\[\mathcal F\left(a \chi_\Omega(\chi^{(2)}_1-\chi^{(2)}_2)\right)(\xi)\cdot (i{\bf e_2}+{\bf e_3})=0,\]
which together with the first equation in \eqref{eqn:FT_1} implies that for complex-valued $a \chi_\Omega(\chi^{(2)}_1-\chi^{(2)}_2)$, we have 
\[\mathcal F\left(a \chi_\Omega(\chi^{(2)}_1-\chi^{(2)}_2)\right)(\xi)\cdot \mu=0\]
for any $\mu\in\R^3$ satisfying $\mu\perp\xi$. Writing $\xi=(\xi_1,\xi_2,\xi_3)$, let $\mu_{jk}=\xi_je_k-\xi_ke_j$ (where $\{e_1, e_2, e_3\}$ is the standard basis of $\R^3$) for $j\neq k$. Then, we have for $j\neq k$
\[\xi_j\left[\mathcal F\left(a \chi_\Omega(\chi^{(2)}_1-\chi^{(2)}_2)\right)(\xi)\right]_k-\xi_k\left[\mathcal F\left(a \chi_\Omega(\chi^{(2)}_1-\chi^{(2)}_2)\right)(\xi)\right]_j=0. \] 
Therefore, we obtain in the sense of distributions
$$\curl \left(a \chi_\Omega(\chi^{(2)}_1-\chi^{(2)}_2)\right)=0$$ in $\R^3$.

Similarly by the second equation in \eqref{eqn:FT_1}, we have $\mathcal F\left(a \chi_\Omega(\chi^{(2)}_1-\chi^{(2)}_2)\right)(\xi)\cdot\xi=0$, hence
\[\nabla\cdot \left(a \chi_\Omega(\chi^{(2)}_1-\chi^{(2)}_2)\right)=0.\]
Then we can conclude $a \chi_\Omega(\chi^{(2)}_1-\chi^{(2)}_2)=0$ in $\R^3$. Since $a$ is assumed non-vanishing in $\Omega$, this implies $\chi^{(2)}_1=\chi^{(2)}_2$ in $\Omega$. This completes the proof of Theorem~\ref{thm::main result 2}. 
}

\end{document}